\documentclass[conference]{article}
\usepackage{cite}
\usepackage{amsmath,amssymb,amsfonts}
\usepackage{amsthm}

\usepackage{algorithmic}
\usepackage{graphicx}
\usepackage{textcomp}
\usepackage{tikz-cd}
\usepackage{xcolor}
\usepackage[dvipsnames]{xcolor}
\usepackage{standalone}

\newtheorem{theorem}{Theorem}
\newtheorem{lemma}{Lemma}
\newtheorem{corollary}{Corollary}

\newtheorem{proposition}{Proposition}

\newtheorem{definition}{Definition}

\def\BibTeX{{\rm B\kern-.05em{\sc i\kern-.025em b}\kern-.08em
    T\kern-.1667em\lower.7ex\hbox{E}\kern-.125emX}}

\DeclareRobustCommand{\stirling}{\genfrac\{\}{0pt}{}}

\newcommand{\ck}[1]{}   
\newcommand{\bw}[1]{}   

\newcommand{\nCol}{k}

\newcommand{\pkgSet}{P}               
\newcommand{\sourcePkg}{p_0}          
\newcommand{\poSet}{\mathcal{P}}               
\newcommand{\dpoSet}[1]{\mathcal{P}^{\downarrow}_{#1}}               
\newcommand{\upoSet}[1]{\mathcal{P}^{\uparrow}_{#1}}               

\newcommand{\childrenOf}[1]{\operatorname{child}(#1)}     

\newcommand{\canonicalOrder}{\geq}    

\newcommand{\downset}[1]{\ensuremath{\downarrow #1}}
\newcommand{\upset}[1]{\ensuremath{\uparrow #1}}

\newcommand{\maxAntichains}[1]{\mathcal{S}(#1)}

\newcommand{\inducedPoSet}[1]{\poSet_{#1}}

\newcommand{\maxDSC}[1]{\mathcal{C}_\nCol(#1)}

\newcommand{\colourSet}{\mathcal{K}}

\begin{document}

\title{Bounds on Decorated Sweep Covers in Tree Posets\\
}

\author{
    Blake A. Wilson \\
    Elmore Family School of ECE \\
    Purdue University \\
    West Lafayette, IN \\
    0000-0002-3055-8334
    \and
    Colin Krawchuk \\
    Mathematics\\
    University of Cambridge\\
    Cambridge, UK
}

\maketitle

\ck{Scope of this work: Bounds on coloured sweep covers. I would say this rather than maximal antichains, as we don't cover that in full generality. If you are comfortable defining a sweep cover to be a maximal antichain in a tree, then I think this better fits what we actually do here. }
\begin{abstract}
We introduce decorated sweep covers as a colouring on maximal antichains in tree posets such that if two elements have the same colour they are siblings. 
DSCs appear in applications wherever maximal antichains require structural differentiation among parallel options that have a common ancestry, e.g., distributed systems, drone routing in logistics, and Monte Carlo Tree Search. 
We restrict our analysis to enumerating $k$-coloured DSCs in $n$-ary tree posets and prove i) their ordinary generating function in Theorem \ref{thm:ogf}, ii) new Schur-convexity results for binomial coefficients in Theorem \ref{thm:bin} and iii) bounds on the OGF coefficients which scale as $\Theta(D_n^k k^{-3/2})$ in Theorem \ref{thm:bounds} where $D_n > n$ is the exponential growth constant for each $n$.

\ck{In the abstract where you list the achievements, specify the result/section and reference it. (ie. In theorem ... we prove ...}
\bw{Done}
\end{abstract}

\section{Introduction}

The study of antichains in partially ordered sets (posets) is central to many topics in analysis of algorithms and enumerative combinatorics, with connections to extremal set theory, distributed computing, and decision-making in structured environments \cite{erdos2007splitting, gellert2018antichains, tsai2017maxantichains}. 
A maximal antichain—a set of mutually incomparable elements that are comparable to the rest of the poset—plays a key role in understanding the computational complexity of many algorithms involving posets. 

In many applications, such as task scheduling in distributed systems or multi-agent planning in tree-like environments \cite{he2023parallelism, bosek2018chainpartition}, it is useful to assign additional structure to antichains, such as colouring elements to encode constraints or parallelism.
In this work, we introduce and study \emph{decorated sweep-covers} (DSCs): colourings of maximal antichains in which only sibling elements (i.e., elements sharing a parent) may receive the same colour.
Our main goal is to enumerate DSCs in tree posets \cite{jiang2024treeposets}, to understand how they grow with the number of colours and the structure of the underlying tree.
We develop recursive constructions and derive asymptotic bounds.

The main contributions of this paper are:
1) We formalize DSCs and provide a decomposition theorem for their structure in tree posets.
2) We establish log-convexity and obtain sharp bounds for binomial products in their upper parameter, using tools from analytic combinatorics \cite{flajolet2009analytic}.
3) We derive recursive and explicit formulas for the number of DSCs in infinite $n$-ary trees, and analyze the asymptotic growth of this quantity.
Namely, we prove the following results:
\begin{theorem}[Simplified]
    The number of lower $\nCol$-DSCs in an infinite, $n$-ary tree poset is generated by the ordinary generating function (OGF) 
\[
F_n(z) = \sum_{u=0}^n \binom{n}{u} B_u(z) F_n(z)^{n - u},
\]
where $B_u(z) = \sum_{k=0}^u \stirling{u}{k} z^k$ is the $u$-th Touchard polynomial.
 \label{thm:ogf}
\end{theorem}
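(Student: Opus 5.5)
We argue by a first-step decomposition at the root, treating $z$ as marking colours. A lower $\nCol$-DSC in the infinite $n$-ary tree poset is a maximal antichain $S$ that is finite, because it must cut every root-to-infinity path at one vertex. It carries a colouring $c:S\to\colourSet$ in which equal colours occur only among siblings, and which we count up to relabelling of colours, so that the coefficient of $z^k$ counts DSCs using exactly $k$ colour classes. We also adopt the convention that the antichain never consists of the root alone, and use that the root contributes no leaf of its own; this is what makes the equation have no constant term beyond the $u=0$ summand. Let $F_n(z)=\sum_k f_k z^k$, where $f_k$ is the number of such objects in the tree hanging below any vertex. By self-similarity of the infinite $n$-ary tree, every subtree rooted at a child is isomorphic to the whole tree, so all these subtrees share the same generating function.

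The key structural step is the following. Given a maximal antichain $S$ of the whole tree avoiding the root, each of the $n$ children $c_1,\dots,c_n$ either belongs to $S$ or not. Let $U$ be the set of children lying in $S$, with $|U|=u$. Maximality and the antichain property then force $S\cap T(c_i)$, for $c_i\notin U$, to be a maximal antichain of the subtree $T(c_i)$ avoiding $c_i$ (a lower sweep cover of that subtree). Conversely, any choice of $U$ together with such sub-antichains yields a maximal antichain. This gives the decomposition $S=U\sqcup\bigsqcup_{i\notin U}S_i$, with $\binom{n}{u}$ choices of $U$.

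Next we handle the colours. Elements of different blocks are never siblings, except that the elements of $U$ are siblings of one another. Hence the colour classes restricted to $U$ form an arbitrary set partition of $U$ into some $j$ blocks, counted by $\stirling{u}{j}$, which sums to the Touchard factor $B_u(z)$. Each $S_i$ carries an independent DSC colouring, and no class may be shared across different blocks. Since colour classes are counted as unlabelled partitions, the total number of classes is additive and the generating functions multiply. This gives the term $\binom{n}{u}B_u(z)F_n(z)^{n-u}$ for each $u$, and summing over $u$ proves the equation.

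The main obstacle is fixing the conventions precisely so that the recursion is exactly the stated one. We must check that the definition of lower $\nCol$-DSC counts colour partitions (Stirling numbers) rather than labelled colourings; otherwise falling factorials would appear instead of Touchard polynomials. We must also check that the $u=0$ term, $F_n^n$, correctly represents all covers avoiding every child, with recursion well-founded because $S$ is finite. To settle this, we first confirm against the definition that colour-class counting is intended and that a well-founded induction on the depth of $S$ justifies equality of formal power series. The solution is then uniquely determined coefficientwise, because $[z^k]$ of the right-hand side involves only lower coefficients once each $S_i$ uses at least one colour.
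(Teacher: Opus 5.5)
Your proof is correct and follows the paper's argument. You split on the set $U$ of root-children lying in $S$, colour $U$ by set partitions (giving $B_u(z)$), and fill the other $n-u$ subtrees with independent lower DSCs; this is exactly Lemma~\ref{lem:recurrence}, which the paper multiplies by $z^k$ and sums, whereas you read off the product of generating functions directly.

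Two side remarks need fixing. First, $S$ is finite because each colour class lies in a single sibling group, so $|S|\le nk$. It is not because maximal antichains must meet every infinite path: writing vertices of the binary tree as words in $\{a,b\}$, the set $\{b,ab,aab,\dots\}$ is an infinite maximal antichain that misses the leftmost path. Second, your coefficientwise uniqueness step, like the paper, needs $n\ge 2$, since for $n=1$ the $u=0$ summand is $F_n$ itself.
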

Lower DSCs account for all except one DSC in $T_n^\infty$, so the total quantity of DSCs is given by $F_n(z) + 1$.
In our case the Bell numbers which construct $B_u(z)$ give rise to an OGF in Thm. \ref{thm:ogf}, which cannot be solved directly due to Abel-Ruffini.
Therefore we
use bootstrapping and the following new theorem 
to show bounds in Thm. \ref{thm:bounds}.
\begin{theorem}
    
Let $N = (n_1, \dots, n_m)$ be an integer composition (ordered partition) of $n = \sum_{j=1}^m n_j$ with each $n_j \ge k$, and let $k \ge 1$. 
Then:
\[
\left( \frac{n}{ekm} \right)^{kn} \left(\sqrt{2\pi n/m} \right)^k
\leq
\prod_{n_j\in N} \prod_{i = 1}^{n_j} \binom{i}{k}
\leq
\left( \frac{\tilde{n}}{k} \right)^{k \tilde{n} + \frac{k}{2} - \frac{\tilde{n}}{2}} (\sqrt{2\pi})^{k - \tilde{n}},
\]
where $\tilde{n} = n - (m - 1)k$.
\label{thm:bin}
\end{theorem}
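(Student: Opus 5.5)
The plan is to reduce both bounds to a one-variable problem through a majorization (Schur-convexity) argument, and then estimate the resulting single-block products with elementary binomial bounds and Stirling's formula. Since $\binom{i}{k}=0$ for $i<k$, I read the inner product as running over the nonvanishing factors $k\le i\le n_j$; otherwise both sides of the lower bound degenerate. This is also where the hypothesis $n_j\ge k$ enters. Set
\[
f(t)=\sum_{i=k}^{t}\log\binom{i}{k},\qquad \log\prod_{n_j\in N}\prod_{i}\binom{i}{k}=\sum_{j=1}^m f(n_j).
\]

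\textbf{Step 1 (convexity).} The increments $f(t)-f(t-1)=\log\binom{t}{k}$ are nondecreasing in $t$, since $\binom{t}{k}/\binom{t-1}{k}=t/(t-k)\ge 1$. Hence $f$ is discretely convex on $\{k,k+1,\dots\}$, and in fact log-convex in the upper parameter in the sense the paper advertises. By Karamata's inequality, $\Phi(N)=\sum_j f(n_j)$ is Schur-convex on compositions of $n$ with parts $\ge k$. Every such composition majorizes the balanced one, $(n/m,\dots,n/m)$ (or its integer rounding), and is majorized by the extreme one, $(k,\dots,k,\tilde n)$ with $\tilde n=n-(m-1)k$. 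Since $f(k)=0$, this gives
\[
m\,f(n/m)\ \lesssim\ \Phi(N)\ \le\ f(\tilde n).
\]
For the left end, when $m\nmid n$ I will extend $f$ to a convex function of a real variable, interpolating via $\log\Gamma$, and apply Jensen's inequality instead.

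\textbf{Step 2 (upper bound).} Use $\binom{i}{k}\le i^k/k!$ to get $\prod_{i=k}^{\tilde n}\binom{i}{k}\le (\tilde n!)^k/(k!)^{\tilde n-k+1}$, roughly. Then apply Stirling in the form $\sqrt{2\pi t}\,(t/e)^t\le t!\le e\sqrt{t}\,(t/e)^t$, with the upper version for $\tilde n!$ and the lower version for $k!$. The $e$-powers cancel to leading order, $\tilde n^{k\tilde n}/k^{k\tilde n}$ gives $(\tilde n/k)^{k\tilde n}$, and the $\sqrt{\cdot}$ factors produce $(\tilde n/k)^{k/2-\tilde n/2}$ together with $(\sqrt{2\pi})^{k-\tilde n}$.

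\textbf{Step 3 (lower bound).} Use $\binom{i}{k}\ge (i/k)^k$ on the balanced block, so that $\prod_{i\le n/m}\binom{i}{k}\ge ((n/m)!)^k k^{-kn/m}$ up to the missing initial factors. Then apply the lower Stirling bound to $(n/m)!$ and raise to the power $m$. This produces $(n/(ekm))^{kn}$ and the $\sqrt{2\pi n/m}$ factors.

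\textbf{Main obstacle.} The hard part is the bookkeeping in Steps 2 and 3. The truncated lower index $i\ge k$, and the exact exponent on the $\sqrt{2\pi}$ and $\sqrt{\tilde n/k}$ terms, must match the stated closed forms. The constants from Stirling (the factor $e$ versus $\sqrt{2\pi}$) and the $k!$ denominators have to be absorbed rather than merely bounded asymptotically. My first attempt will be to compare $\sum_i\log i$ with $\int\log x\,dx$, using trapezoid-rule error bounds instead of Stirling directly. If the exponents still do not line up, I will prove a slightly weaker constant and check small cases, such as $k=1$ and $m=1$, to calibrate.
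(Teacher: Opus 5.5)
Your plan follows the paper's route: Schur-convexity reduces everything to the balanced and the concentrated composition, and then $(i/k)^k\le\binom{i}{k}\le i^k/k!$ plus Stirling finishes. Your lower-bound half is fine. Reading the inner product over $k\le i\le n_j$ is a necessary repair, since literally the product is $0$ for $k\ge 2$ and the stated lower bound fails. (You can even skip majorization there: apply $\binom{i}{k}\ge (i/k)^k$ blockwise and use convexity of $x\log x$, which avoids the rounding issue when $m\nmid n$.)

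The genuine gap is Step~2, and no bookkeeping can close it, because the stated upper bound is false.
\begin{itemize}
\item For $k=1$, $m=1$, $n=2$ (so $\tilde n=2$), the product is $\binom{1}{1}\binom{2}{1}=2$, but the right-hand side is $2^{3/2}(2\pi)^{-1/2}\approx 1.13$.
\item For $k=2$, $m=1$, $n=5$, the product is $1\cdot 3\cdot 6\cdot 10=180$, against $(5/2)^{17/2}(2\pi)^{-3/2}\approx 153$.
\end{itemize}
The false step is your claim that the square-root factors combine into $(\tilde n/k)^{k/2-\tilde n/2}(\sqrt{2\pi})^{k-\tilde n}$. Stirling applied to $(\tilde n!)^k/(k!)^{\tilde n}$ actually produces $(\sqrt{2\pi\tilde n})^k/(\sqrt{2\pi k})^{\tilde n}=(\sqrt{2\pi})^{k-\tilde n}\,\tilde n^{k/2}k^{-\tilde n/2}$. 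This is larger than what you wrote by the factor $k^{k/2}(\tilde n/k^2)^{\tilde n/2}$.

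That factor is unbounded, so your fallback of proving a slightly weaker constant cannot rescue the stated shape. For fixed $k$, $\log\prod_{i=k}^{\tilde n}\binom{i}{k}=k\tilde n\log(\tilde n/k)+O_k(\tilde n)$, while the logarithm of the claimed bound is $k\tilde n\log(\tilde n/k)-\tfrac{\tilde n}{2}\log\tilde n+O_k(\tilde n)$. So the inequality fails for every $k$ once $\tilde n$ is large. The paper's own proof has exactly the same hole: it only says that Stirling ``isolates the remaining bounds,'' so it supplies no fix.

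What your Steps~1--2 genuinely prove, using Robbins' form $\tilde n!\le\sqrt{2\pi\tilde n}(\tilde n/e)^{\tilde n}e^{1/(12\tilde n)}$, is
\[
\prod_{j}\prod_{i=k}^{n_j}\binom{i}{k}\le\prod_{i=k}^{\tilde n}\binom{i}{k}\le\frac{k^{k-1}}{(k-1)!}\,\frac{(\tilde n!)^k}{(k!)^{\tilde n}}\le\frac{k^{k-1}e^{k/(12\tilde n)}}{(k-1)!}\Bigl(\frac{\tilde n}{k}\Bigr)^{k\tilde n}\tilde n^{k/2}k^{-\tilde n/2}(\sqrt{2\pi})^{k-\tilde n}.
\]
A correct version of the theorem therefore replaces $(\tilde n/k)^{k/2-\tilde n/2}$ by $\tilde n^{k/2}k^{-\tilde n/2}$ and carries a $k$-dependent prefactor such as the one shown.
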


\begin{theorem}
The coefficients $f_n(\nCol)$ for $F_n(z)$ exhibit purely exponential growth with polynomial modifiers, bounded by:
\begin{align}
    f_n(\nCol) &= \Theta(D_n^k k^{-3/2}) \nonumber
\end{align} \label{thm:bounds}
where $D_n > n$.
\end{theorem}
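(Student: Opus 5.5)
The plan is to treat $F_n(z)$ as an implicitly defined algebraic function and apply the smooth implicit-function schema of analytic combinatorics \cite{flajolet2009analytic}. By Theorem \ref{thm:ogf}, $F = F_n(z)$ satisfies $F = \Phi(z,F)$ with
\[
\Phi(z,w) = \sum_{u=0}^n \binom{n}{u} B_u(z)\, w^{n-u}.
\]
This is a polynomial in $(z,w)$ with nonnegative coefficients. We have $\Phi(0,0) = B_n(0) = 0$ for $n\ge 1$, and $\Phi_w(0,0) = n B_{n-1}(0) = 0$ for $n\ge 2$. The $w^n$ term has coefficient $B_0 = 1$, so $\Phi$ is nonlinear in $w$, and $\Phi(z,0)=B_n(z)\not\equiv 0$. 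If the recursion gives $f_n(0)=0$ (no colours, no lower DSCs), then $F_n(0)=0$ is the correct branch. These are the hypotheses of the positive implicit function theorem (Flajolet--Sedgewick, Thm.~VII.3), provided we also check aperiodicity.

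The first step is to locate the singularity. We solve the characteristic system
\[
\tau = \Phi(\rho,\tau), \qquad 1 = \Phi_w(\rho,\tau) = \sum_{u=0}^{n-1} (n-u)\binom{n}{u} B_u(\rho)\, \tau^{n-u-1}.
\]
It has a unique solution with $0<\rho<\infty$ and $\tau>0$: along the curve $w\mapsto \Phi(z,w)$ the quantity $\Phi_w$ increases strictly from $0$. We define $D_n = 1/\rho$. Since $\Phi_{ww}>0$ and $\Phi_z>0$ at $(\rho,\tau)$, the theorem gives a square-root singularity
\[
F_n(z) = \tau - \gamma\sqrt{1 - z/\rho} + O(1-z/\rho), \qquad \gamma = \sqrt{2\rho\Phi_z/\Phi_{ww}}.
\]
Transfer then yields $f_n(k) \sim \frac{\gamma}{2\sqrt{\pi}}\, D_n^k k^{-3/2}$, which is the claimed $\Theta(D_n^k k^{-3/2})$.

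Aperiodicity needs a separate check. Since $B_1(z)=z$ and the $u=1$ term $n z w^{n-1}$ appears, $f_n(k)>0$ for all large $k$. So $\rho$ is the only dominant singularity on $|z|=\rho$, and the estimate holds for every large $k$ rather than along a residue class.

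The second step is to show $D_n>n$, i.e. $\rho<1/n$, and I expect this to be the main obstacle because $(\rho,\tau)$ has no closed form; Abel--Ruffini rules out solving for them. The idea is a comparison argument. Dropping all but the $u=0$ and $u=1$ terms gives $\Phi(z,w)\ge w^n + n z w^{n-1}$. Hence $F_n$ dominates coefficientwise the solution $G$ of $G = G^n + nzG^{n-1}$. The singularity of $G$ can be computed explicitly, or at least bounded, by the same characteristic system. If that comparison alone does not reach $n$, I will add the $u=2$ term $\binom{n}{2}(z+z^2)w^{n-2}$.

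As a fallback, I would bootstrap from the recursion for $f_n(k)$ with Theorem \ref{thm:bin}. The recursion expands the $n$-th power as a sum over compositions of $k$, and the Stirling factors give products of binomial coefficients. The lower bound in Theorem \ref{thm:bin} gives growth at least $(n+\epsilon)^k$ up to polynomial factors, and exponential rates are preserved under the square-root transfer. The upper bound in Theorem \ref{thm:bin} serves as a consistency check that $D_n$ is finite. This fallback matches the paper's mention of bootstrapping.
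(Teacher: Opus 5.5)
Your analytic half (smooth implicit-function schema for $y=\Phi(z,y)$, then transfer, giving $f_n(k)\sim\frac{\gamma}{2\sqrt{\pi}}D_n^k k^{-3/2}$) follows the paper's route in Propositions~\ref{prop:implicit} and~\ref{prop:analytic_upper}. You handle aperiodicity and the constant more carefully than the paper does. The gap is the claim $D_n>n$.

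Your comparison equation $G=G^n+nzG^{n-1}$ has no $w$-free term: you discarded $B_n(z)$, which is the only source term in $\Phi$. With $G(0)=0$, if $G=O(z^m)$ then the right-hand side is $O(z^{m+1})$. Hence $G\equiv 0$, $\rho_G=\infty$, and the comparison gives nothing. Adding $\binom{n}{2}(z+z^2)w^{n-2}$ leaves the same defect for every $n\ge 3$.

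Even if you reinstate the seed $z\preceq B_n(z)$, keeping only the high powers of $w$ is too weak. For $n=4$, $G=G^4+4zG^3+z$ at $z=1/4$ becomes $\tau=\tau^4+\tau^3+\tfrac14$. The function $\tau^4+\tau^3+\tfrac14-\tau$ equals $\tfrac14>0$ at $\tau=0$ and $-\tfrac1{16}<0$ at $\tau=\tfrac12$. So the branch is still analytic at $z=1/4$ and $\rho_G>1/4$. Your fallback via Theorem~\ref{thm:bin} names no mechanism by which products $\prod_i\binom{i}{k}$ arise from the recurrence, so it is not an argument.

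The missing ingredient is the term linear in $w$, namely $nB_{n-1}(z)\,w\succeq nz\,w$. This is the $u=n-1$, $k'=1$ summand of Lemma~\ref{lem:recurrence}. It gives $f_n(k)\ge n f_n(k-1)$, hence $f_n(k)\ge n^{k-1}$ (equivalently $F_n\succeq z/(1-nz)$). This is how the paper proceeds: it combines the recurrence lower bound of Lemma~\ref{lem:lower_bound} with the $k^{-3/2}$ asymptotic.

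The lower bound forces $D_n\ge n$. The case $D_n=n$ is then impossible, because $k^{-3/2}n^k=o(n^{k-1})$. Equivalently, $F_n(1/n)=\sum_k f_n(k)n^{-k}=\infty$, whereas at a square-root singularity $F_n(\rho)=\tau<\infty$, so $\rho<1/n$. The crude bound $n^{k-1}$ already suffices; the extra polynomial factor the paper claims in that lemma is not needed.
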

While the result in Thm. \ref{thm:bin} is straightforward from Schur-convexity and Stirling's approximation, the authors are not aware of an existing proof in the literature.

\section{Preliminaries}

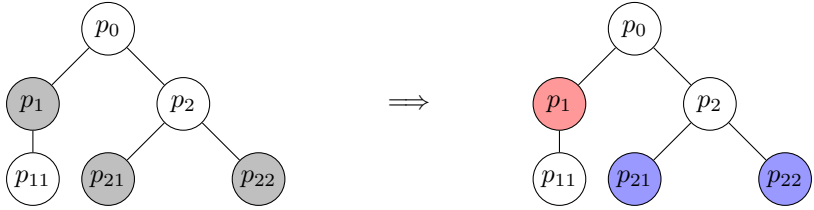
\begin{figure}[ht]
  \centering
  \begin{tikzpicture}[
      level distance=10mm,
      sibling distance=20mm,
      every node/.style={circle, draw, minimum size=7mm, inner sep=1pt},
      antichain/.style={fill=gray!50},
      colA/.style={fill=red!40},
      colB/.style={fill=blue!40}
    ]
    \begin{scope}
      \node (p0) {$p_0$}
        child { node[antichain] (p1) {$p_1$}
          child { node (p11) {$p_{11}$} }
        }
        child { node (p2) {$p_2$}
          child { node[antichain] (p21) {$p_{21}$} }
          child { node[antichain] (p22) {$p_{22}$} }
        };
    \end{scope}

    \begin{scope}[xshift=5cm]
      \node[draw=none] at (-1,-1) {\(\Longrightarrow\)};
    \end{scope}

    \begin{scope}[xshift=7cm]
      \node (q0) {$p_0$}
        child { node[colA] (q1) {$p_1$}
          child { node (q11) {$p_{11}$} }
        }
        child { node (q2) {$p_2$}
          child { node[colB] (q21) {$p_{21}$} }
          child { node[colB] (q22) {$p_{22}$} }
        };
    \end{scope}
  \end{tikzpicture}
  \caption{A 2‐level tree poset: on the left the maximal antichain $S = \{p_{1}, p_{21}, p_{22}\}$ is shaded grey; on the right the same maximal antichain is coloured as a DSC $c(p_{1}) = \text{red}$, $c(p_{21}) = c(p_{22}) = \text{blue}$.}
  \label{fig:scma_example}
\end{figure}

A \emph{partially ordered set} (or \emph{poset}) $\poSet = (\pkgSet,\canonicalOrder)$ consists of a set $\pkgSet$ and a reflexive, antisymmetric, and transitive binary relation $\canonicalOrder$. All posets considered in this work will be bounded above. That is, $\poSet$ has a maximum element $\sourcePkg\in\pkgSet$, satisfying $\sourcePkg \canonicalOrder p $ for all $p\in\pkgSet$.
A subset $S\subseteq\pkgSet$ is an \emph{antichain} if its elements are pairwise incomparable. An antichain is \emph{maximal} if every element not in the antichain is comparable to at least one element in the antichain. An element $p$ is said to cover $q$ if $q < p$ and no element $r$ satisfies $q < r < p$. The \emph{Hasse diagram} of a poset is the directed graph representing the cover relation: there is an edge $(p,q)$ if $q$ covers $p$.

For any element $p\in\pkgSet$, we refer to the elements that cover $p$ as its \emph{parents}, the elements covered by $p$ as its \emph{children}, and the elements sharing at least one common parent with $p$ as its \emph{siblings}. This paper studies \emph{decorated sweep covers} (DSCs), defined as follows:

\begin{definition}[Decorated Sweep Cover (DSC)]
Given a poset $\poSet$ and a colour set $\colourSet$ with $\nCol$ colours, a \emph{decorated sweep cover (DSC)} is a colouring $c:S\to\colourSet$ of a maximal antichain $S\subseteq\pkgSet$ where $c(p)=c(q)$ implies $p$ and $q$ are siblings.
\end{definition}
We denote by $\maxDSC{\poSet}$  the set of all DSCs in $\poSet$ using exactly $k$ colours, defined up to relabeling (permutations of the colours). Thus, the DSC enumeration is defined modulo the symmetric group action on the colours. 
In this work, we investigate the asymptotic growth of DSCs and analyze their combinatorial complexity. The motivation is to provide algorithmic analysis for problems in distributed systems and pursuit–evasion settings.

\section{Methods}
In this section, we lay out the necessary lemmas for proving Thm. \ref{thm:ogf} and Thm. \ref{thm:bounds}. 
For Thm. \ref{thm:ogf}, we mainly use the disjointness of induced posets at every element in a tree poset to construct a recurrence relation.
From the recurrence, the OGF falls out naturally.
The proof of Thm. \ref{thm:bounds}, relies on Thm. \ref{thm:bin}, which is proven in Appendix \ref{app:bin}.
\subsection{OGF for DSCs in Infinite $n$-ary Trees}

Denote by $\maxAntichains{\poSet}$ the set of maximal antichains of $\poSet$.
The down-set of an element $p$ is defined as $\downset{p}:= \{ q \in P: q \leq p \}$ and similarly, the up-set of $p$ is $\upset{p}:= \{ q \in P: q \geq p \}$.
We say $\dpoSet{p} = (\downset{p}, \canonicalOrder)$ is the down-induced poset of $p$, and $\upoSet{p} = (\upset{p}, \canonicalOrder)$ for the up-induced poset of $p$. 
Let $\childrenOf{p}=\{p_1,\dots,p_m\}$ denote the children of an element $p$.
We say a poset $\poSet$ is disjoint at an element $p$ iff the down-sets $\{\downset{p_1},...,\downset{p_n}\}$ given by the children $\{p_1,\dots,p_n\}$ of $p$ are all mutually disjoint. 
This property holds at every element that has children in a tree poset and we exploit it to derive a recurrence relation based on the regularity of infinite $n$-ary trees.

\begin{corollary}
    Any tree poset $\poSet$ is disjoint at every element $p$. \label{cor:tree_disjoint}
\end{corollary}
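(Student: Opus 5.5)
We need to show that in a tree poset, for every element $p$ with children $p_1,\dots,p_m$, the down-sets $\downset{p_i}$ are pairwise disjoint. The plan is a short argument by contradiction that uses only the defining property of a tree poset: every element other than the maximum $\sourcePkg$ has exactly one parent. Equivalently, the Hasse diagram is a rooted tree, so for each $q$ the up-set $\upset{q}$ is a chain from $q$ to $\sourcePkg$.

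\textbf{Step 1.} Suppose $q \in \downset{p_i} \cap \downset{p_j}$ for some $i \neq j$. Then $q \le p_i$ and $q \le p_j$, so both $p_i$ and $p_j$ lie in $\upset{q}$.

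\textbf{Step 2.} Show that $\upset{q}$ is a chain. If $q=\sourcePkg$ the claim is trivial. Otherwise, let $r$ be the unique parent of $q$. Any $s>q$ must satisfy $s \ge r$: the poset is bounded above, and we assume it is locally finite (as an infinite $n$-ary tree is), so a maximal chain from $q$ up to $s$ passes through a parent of $q$, and that parent must be $r$. Hence $\upset{q}=\{q\}\cup\upset{r}$. Induction on the (finite) distance from $q$ to $\sourcePkg$ then shows $\upset{q}$ is totally ordered.

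\textbf{Step 3.} Since $\upset{q}$ is a chain, $p_i$ and $p_j$ are comparable, say $p_i < p_j$. Both are children of $p$, so $p_i < p_j < p$. This contradicts the fact that $p$ covers $p_i$. Therefore the down-sets $\downset{p_1},\dots,\downset{p_m}$ are mutually disjoint, and the tree poset is disjoint at $p$.

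The only step needing care is Step 2, namely ensuring that every strict upper bound of $q$ passes through its unique parent. This relies on local finiteness, or equivalently on elements having finite depth. If the paper's notion of tree poset is simply "the Hasse diagram is a rooted tree with root $\sourcePkg$", then Step 2 follows directly: the unique path to the root gives the chain.
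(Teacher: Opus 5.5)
Your proof is correct. The paper gives no proof of this corollary. It only remarks that disjointness ``holds at every element that has children in a tree poset,'' treating this as immediate from a notion of tree poset it never formally defines.

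Your argument fills that gap along the natural route. Uniqueness of parents makes each up-set $\uparrow q$ a chain. So a common lower bound $q$ of two distinct children $p_i, p_j$ would make them comparable, and then $p_i < p_j < p$ contradicts $p$ covering $p_i$.

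Your caveat in Step 2 is the right one to flag. If ``tree poset'' only means that every non-maximum element has a unique cover, you need local finiteness to ensure that every strict upper bound of $q$ lies above its parent. That holds for $\mathcal{T}_n^\infty$, where every element has finite depth. If instead you use the order-theoretic definition, in which every $\uparrow q$ is a chain, Step 2 is the definition itself and the proof reduces to Steps 1 and 3.
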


\begin{lemma}[Enumeration via Disjointness]\label{lem:sigma_bij}
Let $\poSet_1,\dots,\poSet_n$ be pairwise disjoint posets. Define
\[
\sigma:\maxAntichains{\poSet_1}\times\cdots\times\maxAntichains{\poSet_n}\to\maxAntichains{\bigcup_i\dpoSet{p_i}},\quad\sigma(\pi_1,\dots,\pi_n)=\bigcup_i\pi_i.
\]
Then, $\sigma$ is a bijection, and thus
\[
|\maxAntichains{\cup_i\dpoSet{p_i}}|=\prod_{i=1}^n|\maxAntichains{\dpoSet{p_i}}|.
\]
\end{lemma}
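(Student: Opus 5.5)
The plan is to read the statement with $\poSet_i = \dpoSet{p_i}$, where the $p_i$ are children of a common element so that, by Corollary \ref{cor:tree_disjoint}, the down-sets $\downset{p_1},\dots,\downset{p_n}$ are pairwise disjoint. The union $\bigcup_i \dpoSet{p_i}$ carries the induced order. The key structural observation is that no element of $\downset{p_i}$ is comparable to any element of $\downset{p_j}$ for $i\neq j$. In a tree poset, if $q\le p_i$, $r\le p_j$ and $q\le r$, then $q$ lies below both $p_i$ and $p_j$. Since $\upset{q}$ is a chain in a tree, $p_i$ and $p_j$ would be comparable, and as distinct children of the same parent they are not. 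Hence the union poset is the disjoint sum (parallel composition) of the $\poSet_i$, and the rest is a general fact about maximal antichains of disjoint sums.

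First, well-definedness of $\sigma$. Take maximal antichains $\pi_i$ of $\poSet_i$ and set $S=\bigcup_i\pi_i$.
\begin{itemize}
\item $S$ is an antichain: two elements in the same $\pi_i$ are incomparable by assumption, and two elements in different components are incomparable by the observation above.
\item $S$ is maximal: any $x$ in the union lies in some $\downset{p_i}$. If $x\notin S$, then $x\notin\pi_i$, so by maximality of $\pi_i$ in $\poSet_i$ the element $x$ is comparable to some element of $\pi_i\subseteq S$.
\end{itemize}

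Second, bijectivity, which I would prove by exhibiting the inverse $\tau(S)=(S\cap\downset{p_1},\dots,S\cap\downset{p_n})$.
\begin{itemize}
\item Each $S\cap\downset{p_i}$ is an antichain of $\poSet_i$.
\item It is maximal in $\poSet_i$: if $x\in\downset{p_i}\setminus S$, then $x$ is comparable to some $s\in S$. Components are mutually incomparable, so $s\in\downset{p_i}$.
\item $\sigma\circ\tau=\mathrm{id}$ because the down-sets cover the union.
\item $\tau\circ\sigma=\mathrm{id}$ because they are pairwise disjoint, so $\bigl(\bigcup_j\pi_j\bigr)\cap\downset{p_i}=\pi_i$.
\end{itemize}
The cardinality identity then follows immediately. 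If some $\poSet_i$ is infinite the product may be infinite, which the identity still handles as cardinal arithmetic; for the later recurrence it is applied to finite truncations or to generating functions.

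The only real obstacle is the cross-component incomparability. It is not implied by disjointness of the underlying sets alone, and it is exactly where the tree structure enters. I would state it explicitly as the interpretation of "pairwise disjoint posets": disjoint as sets, with no comparabilities between them. In the tree case I would justify it by the chain property of up-sets given above. Everything else is routine set manipulation.
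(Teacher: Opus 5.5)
Your proof is correct and is essentially the paper's argument made explicit: the inverse $\tau(S)=(S\cap\downset{p_1},\dots,S\cap\downset{p_n})$ is exactly the ``unique decomposition'' the paper invokes for surjectivity. You also verify that $\sigma$ actually lands in maximal antichains, which the paper leaves implicit.

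One correction to your closing remark: for down-sets, cross-component incomparability \emph{is} implied by set-disjointness alone. If $q\le r\le p_j$ with $q\in\downset{p_i}$, then $q$ already lies in $\downset{p_i}\cap\downset{p_j}$, and your own argument reaches this point before invoking the chain property of $\upset{q}$. So the tree structure is needed only to guarantee disjointness via Corollary~\ref{cor:tree_disjoint}, and the lemma holds whenever the $\downset{p_i}$ are disjoint.
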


\begin{proof}
Injectivity is immediate since each $\dpoSet{p_i}$ is disjoint. Surjectivity follows from every maximal antichain in the union necessarily and uniquely decomposing into maximal antichains within each individual poset.
\end{proof}

\begin{corollary}\label{cor:induced_count}
If $\poSet$ is disjoint at $p$, then
\[
|\maxAntichains{\inducedPoSet{p}}|=1+\prod_{p_i\in\childrenOf{p}}|\maxAntichains{\inducedPoSet{p_i}}|.
\]
\end{corollary}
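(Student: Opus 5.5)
We must prove Corollary \ref{cor:induced_count}. Here $\inducedPoSet{p}$ means the down-induced poset $\dpoSet{p}$, which is bounded above by $p$. The plan is to split the maximal antichains of $\inducedPoSet{p}$ according to whether they contain $p$, and count each class separately.

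\emph{Case 1: $p \in S$.} Let $S$ be a maximal antichain of $\inducedPoSet{p}$ with $p\in S$. Every element of $\downset{p}$ is comparable to $p$, so an antichain containing $p$ can contain nothing else. Hence $S=\{p\}$. Conversely, $\{p\}$ is an antichain, and it is maximal because every element of $\downset{p}$ lies below $p$. This case contributes exactly $1$.

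\emph{Case 2: $p\notin S$.} Let $S$ be a maximal antichain of $\inducedPoSet{p}$ with $p\notin S$. Every $q\in\downset{p}\setminus\{p\}$ satisfies $q<p$. In a tree poset, a maximal chain from $q$ up to $p$ passes through some child $p_i$, so $q\in\downset{p_i}$. Therefore
\[
\downset{p}\setminus\{p\}=\bigcup_{p_i\in\childrenOf{p}}\downset{p_i}.
\]
By disjointness at $p$ (Corollary \ref{cor:tree_disjoint}), this union is disjoint. Elements in different pieces $\downset{p_i}$ and $\downset{p_j}$ with $i\neq j$ are incomparable, since a common comparability would force the two down-sets to meet. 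So the order on the union is exactly the disjoint union of the orders $\inducedPoSet{p_i}$.

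Next I check that removing $p$ does not change maximality. The only element dropped is $p$, and $p$ is comparable to everything. So an antichain avoiding $p$ is maximal in $\inducedPoSet{p}$ if and only if it is maximal in the poset on $\downset{p}\setminus\{p\}$; in one direction, every element other than $p$ must be comparable to some member of $S$, and $p$ itself is automatically comparable. Applying Lemma \ref{lem:sigma_bij} to the pairwise disjoint posets $\inducedPoSet{p_i}$ then gives the count $\prod_i|\maxAntichains{\inducedPoSet{p_i}}|$ for this case.

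The two cases are disjoint and exhaustive, so summing them gives $1+\prod_{p_i\in\childrenOf{p}}|\maxAntichains{\inducedPoSet{p_i}}|$.

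The main obstacle is the step in Case 2 asserting that elements from different child down-sets are incomparable and that every $q<p$ lies under some child. This is where the tree structure is genuinely needed, beyond mere disjointness of the down-sets. I would argue it from the fact that in a tree each element has a unique parent, so up-sets are chains. If $q\le r$ with $q\in\downset{p_i}$ and $r\in\downset{p_j}$, then $r$ and $p_i$ both lie in the chain $\upset{q}$. Hence either $r\le p_i$, which puts $r$ in both down-sets, or $p_i< r\le p_j$, which makes two distinct children comparable. Both contradict disjointness.
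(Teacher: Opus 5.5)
Your proof is correct and follows the paper's route. The paper's one-line proof also separates the trivial antichain $\{p\}$ from the antichains avoiding $p$, and counts the latter as a product over the children via Lemma \ref{lem:sigma_bij}. You additionally justify $\downset{p}\setminus\{p\}=\bigcup_i\downset{p_i}$, the cross-incomparability, and the transfer of maximality, all of which the paper leaves implicit.

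The one step that needs an extra hypothesis is ``$p$ itself is automatically comparable'', which requires $S\neq\emptyset$. It fails when $\childrenOf{p}=\emptyset$: the empty antichain is vacuously maximal in the empty poset but not maximal in $\{p\}$. In that case the formula gives $1+1=2$, while $|\maxAntichains{\inducedPoSet{p}}|=1$. So you should state explicitly that $p$ has at least one child. This holds automatically in $\mathcal{T}_n^\infty$, and it matches the separate treatment of childless $p$ in Lemma \ref{lem:base_case_k1}.

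A minor remark: cross-incomparability does not need the tree structure. If $q\le r$ with $q\in\downset{p_i}$ and $r\in\downset{p_j}$, then $q\le r\le p_j$ already gives $q\in\downset{p_i}\cap\downset{p_j}$; the case $r\le q$ is symmetric. What genuinely uses the tree, or at least finiteness of intervals, is the covering claim that every $q<p$ lies below some child.
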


\begin{proof}
Immediate by considering either the trivial antichain $\{p\}$ or recursively counting maximal antichains formed by excluding $p$.
\end{proof}


Now, extending to DSCs in trees, each maximal antichain $S$ admits a unique decomposition into collections of siblings: $Y(S)=\{Y_1,\dots,Y_m\}$.

\begin{lemma}[DSC Decomposition]\label{lem:DSC_decomp}
Every DSC colouring $c:S\to\colourSet$ uniquely decomposes as
\[
c=\bigcup_{i=1}^m c_i,\quad c_i:Y_i\to\colourSet_i,\quad\colourSet_i\cap\colourSet_j=\emptyset\text{ for }i\ne j.
\]
\end{lemma}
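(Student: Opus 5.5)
We plan to prove Lemma \ref{lem:DSC_decomp} by showing that the sibling decomposition $Y(S)=\{Y_1,\dots,Y_m\}$ of the maximal antichain induces a partition of the colour classes, and then reading off the restrictions.

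\textbf{Step 1: the blocks $Y_i$ are well defined.} In a tree poset every non-root element has exactly one parent. So the relation ``$p$ and $q$ are siblings (or equal)'' on $S$ is the relation ``same parent'', which is an equivalence relation. Its classes are the $Y_i$, so $S=\bigsqcup_i Y_i$ as a disjoint union. If $S=\{\sourcePkg\}$, we take $m=1$ and treat the root as a singleton block. Corollary \ref{cor:tree_disjoint} guarantees that distinct parents have disjoint down-sets, so the blocks do not interact.

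\textbf{Step 2: colours do not cross blocks.} Set $\colourSet_i := c(Y_i)$ and $c_i := c|_{Y_i}$. Then $c_i: Y_i\to\colourSet_i$ is surjective and $c=\bigcup_i c_i$, because the $Y_i$ partition $S$. Now suppose some colour lies in $\colourSet_i\cap\colourSet_j$ with $i\neq j$. Then there are $p\in Y_i$ and $q\in Y_j$ with $c(p)=c(q)$. The DSC condition forces $p$ and $q$ to be siblings, so they have the same parent and therefore lie in the same block. This contradicts $i\ne j$, so the $\colourSet_i$ are pairwise disjoint. Note that the colouring restricted to a single block is unconstrained; any colouring of a block is allowed.

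\textbf{Step 3: uniqueness.} Suppose $c=\bigcup_i c_i'$ with $c_i':Y_i\to\colourSet_i'$ and the $\colourSet_i'$ disjoint. The domains are fixed by $S$ alone, so necessarily $c_i'=c|_{Y_i}=c_i$. If the codomains are understood as images (exact colour usage, matching the convention for $\maxDSC{\poSet}$), then $\colourSet_i'=c(Y_i)=\colourSet_i$. Uniqueness therefore holds once we adopt the convention that each $c_i$ is surjective onto $\colourSet_i$. We should state this convention explicitly; without it, the $\colourSet_i$ could be enlarged arbitrarily.

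\textbf{Main obstacle.} The mathematics is short, and the main difficulty is precision about the conventions. We need that siblings means sharing the unique parent, which uses the tree hypothesis. We need the root case handled. And we need surjectivity of the $c_i$ for uniqueness. The only place the DSC condition is actually used is the disjointness argument in Step 2.
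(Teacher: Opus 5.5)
Your proposal is correct and takes essentially the same route as the paper, whose proof is the one-liner that the decomposition ``follows directly from the uniqueness and disjointness of maximal sibling groups''; you make the details explicit: the tree hypothesis makes ``same parent'' an equivalence relation, the DSC condition forces the sets $\colourSet_i = c(Y_i)$ to be pairwise disjoint, and uniqueness requires reading the $\colourSet_i$ as images. One small correction to an aside you never use: Corollary~\ref{cor:tree_disjoint} does not give disjoint down-sets for distinct parents (in Fig.~\ref{fig:scma_example}, $p_0$ and $p_2$ are distinct parents of elements of $S$ with $\downset{p_2}\subset\downset{p_0}$), so drop that remark.
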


\begin{proof}
Follows directly from the uniqueness and disjointness of maximal sibling groups.
\end{proof}

\begin{lemma}[Counting DSCs Explicitly]\label{lem:kDSC_from_S}
Given $S$ with sibling decomposition $Y(S)=\{Y_1,\dots,Y_m\}$, the number of distinct $\nCol$-coloured DSCs on $S$ is
\[
C_{\nCol}(S)=\sum_{\substack{k_1+\dots+k_m=\nCol\\k_i\ge1}}\prod_{i=1}^m\stirling{|Y_i|}{k_i},
\]
where $\stirling{n}{k}$ denotes the Stirling number of the second kind.
\end{lemma}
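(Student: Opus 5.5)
The plan is to turn the DSC condition into a statement about set partitions of each sibling group, and then to count how the $\nCol$ colours can be distributed among the groups.

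First, by Lemma \ref{lem:DSC_decomp}, every DSC $c$ on $S$ splits uniquely into colourings $c_i: Y_i \to \colourSet_i$ whose colour sets $\colourSet_i$ are pairwise disjoint. The reason is that two elements in different sibling groups are not siblings, so the DSC condition forbids them from sharing a colour. Conversely, if colourings of the $Y_i$ use pairwise disjoint colour sets, their union is a DSC on $S$, because any two same-coloured elements then lie in the same $Y_i$ and are siblings. So DSCs on $S$ using exactly $\nCol$ colours correspond to tuples $(c_1,\dots,c_m)$ in which $c_i$ uses exactly $k_i = |\colourSet_i|$ colours, with $\sum_i k_i = \nCol$. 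Each $k_i \ge 1$ holds automatically, since $Y_i$ is nonempty and so receives at least one colour.

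Next we pass to the quotient by relabelling. Because colours are counted only up to permutation, a colouring $c$ is determined by the partition of $S$ into its colour classes. The DSC condition says exactly that each colour class lies inside a single $Y_i$. A partition of $S$ into $\nCol$ blocks with every block inside some $Y_i$ is the same as choosing, for each $i$, a partition of $Y_i$ into $k_i \ge 1$ blocks with $\sum k_i = \nCol$. This choice of $(k_1,\dots,k_m)$ and of the partitions is unique, since $k_i$ is the number of blocks meeting $Y_i$. The number of partitions of $Y_i$ into $k_i$ nonempty blocks is $\stirling{|Y_i|}{k_i}$, and the choices for different $i$ are independent. Multiplying over $i$ and summing over compositions of $\nCol$ into $m$ positive parts gives $C_\nCol(S)$.

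The main subtlety is this second step: we need the symmetric group acting on all $\nCol$ colours to induce exactly the set-partition count. It does not act independently on each group, so we must not overcount by a factor such as $\nCol!/\prod k_i!$. I would handle this by working with partitions directly from the start, identifying a DSC modulo relabelling with its colour-class partition, rather than counting labelled colourings and dividing. This identification is valid because all $\nCol$ colours are used, so the action of the symmetric group is free, and its orbits are precisely the partitions.
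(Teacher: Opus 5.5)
Your proof is correct and follows essentially the same route as the paper: colour classes cannot cross sibling groups, so a DSC up to relabelling amounts to choosing a partition of each $Y_i$ into $k_i \ge 1$ blocks, which gives $\prod_i \stirling{|Y_i|}{k_i}$ for each composition of $k$. Your explicit identification of a DSC modulo the global colour permutation with its colour-class partition spells out what the paper's short argument leaves implicit; strictly, that identification rests on the fact that two surjective colourings with the same colour classes differ by a unique permutation, not on freeness of the action, but this does not affect your argument.
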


\begin{proof}
Each sibling set $Y_i$ must be coloured distinctly from other sibling sets. For each sibling set $Y_i$, we have exactly $\stirling{|Y_i|}{k_i}$ ways to partition $|Y_i|$ elements into $k_i$ colour classes. Summing over all valid compositions $k_1+\dots+k_m=\nCol$ gives the total.
\end{proof}

\begin{lemma}[Base Case: One Colour]\label{lem:base_case_k1}
For any non-maximal element $p \in \poSet$, i.e., $\childrenOf{p} \ne \emptyset$, in a tree poset $\poSet$, there are exactly two 1-coloured DSCs of $\dpoSet{p}$:
\[
S=\{p\}\quad\text{and}\quad S=\childrenOf{p}
\].
Otherwise, $p$ is a maximal element, i.e., $\childrenOf{p} = \emptyset$, and has a single 1-coloured DSC of $S = \{p\}$. 
\end{lemma}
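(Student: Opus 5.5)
We prove Lemma~\ref{lem:base_case_k1} by combining the structure of maximal antichains of $\dpoSet{p}$ with the sibling condition in the definition of a DSC. With exactly one colour, every element of $S$ gets the same colour. So any two distinct elements of $S$ must be siblings, i.e.\ share a common parent. In a tree poset each element other than the root of the induced poset has a unique parent. Hence $S$ is either a singleton, or a set of elements all having the same parent $q\in\downset{p}$.

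We then split on the size of $S$. \emph{Case $|S|=1$.} We need $S=\{r\}$ to be a maximal antichain of $\dpoSet{p}$, so every element of $\downset{p}$ must be comparable to $r$. Since $p\ge r$, this forces the whole of $\downset{p}$ to lie in $\upset{r}\cup\downset{r}$. If $r\neq p$, then $r$ has a parent $q$ with $q\le p$. If $r$ is the only child of $q$, then $\{r\}$ and $\{q\}$ give distinct singleton antichains, and we must argue about which are maximal. This is where care is needed, since in a chain every singleton is a maximal antichain. I would handle it by using that the relevant trees are $n$-ary with $n\ge 2$, so a nonmaximal element always has at least two children. Then any $r\ne p$ has a sibling $r'$ that is incomparable to $r$, so $\{r\}$ is not maximal. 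Thus the only singleton is $S=\{p\}$, which is clearly a maximal antichain. If $p$ is a maximal element in the paper's sense ($\childrenOf{p}=\emptyset$), then $\downset{p}=\{p\}$, and this is the only DSC, giving the second assertion.

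\emph{Case $|S|\ge2$.} Here $S\subseteq\childrenOf{q}$ for a single $q\in\downset{p}$. For maximality, every child of $q$ not in $S$ is incomparable to all of $S$, so $S=\childrenOf{q}$. Next, if $q\ne p$, then $q$ has a sibling $q'$. The element $q'$ lies outside $\downset{q}$, and it is not an ancestor of any element of $S$, since ancestors of $S$ lie on the chain from $q$ up to $p$. Hence $q'$ is incomparable to everything in $S$, contradicting maximality. So $q=p$ and $S=\childrenOf{p}$. Conversely, $\childrenOf{p}$ is a maximal antichain: every $x\in\downset{p}\setminus\{p\}$ lies below some child (by Corollary~\ref{cor:tree_disjoint}, exactly one), and $p$ lies above all of them. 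Its single colouring is valid because all its elements are siblings. Modulo relabelling, each of the two antichains carries exactly one $1$-colouring, consistent with $\stirling{m}{1}=1$ in Lemma~\ref{lem:kDSC_from_S}. This gives exactly two DSCs.

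The main obstacle is the degenerate unary situation: when $p$ has a single child, $\{p\}$ and $\childrenOf{p}$ are still the only candidates, but deeper singletons must be excluded. I would first handle this through the standing $n$-ary assumption, under which every internal node has $n\ge 2$ children. Failing that, I would observe that in the infinite $n$-ary tree every non-maximal element has a full set of $n$ children, which gives the needed siblings.
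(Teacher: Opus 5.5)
Under the hypothesis you invoke (every non-leaf node has at least two children), your argument is correct. It follows the same idea as the paper's one-line proof: a single colour forces the elements of $S$ to be pairwise siblings, so $S$ is either a singleton or sits inside one sibling group. You go further than the paper by actually checking maximality, which the paper's proof does not do.

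The thing to fix is how you frame that hypothesis. It is not a workaround for a degenerate case. It is necessary, because the lemma as stated for an arbitrary tree poset is false:
\begin{itemize}
\item For a chain $p>c>d$, the sets $\{p\}$, $\{c\}$ and $\{d\}$ are three 1-coloured DSCs of $\dpoSet{p}$.
\item More generally, whenever $p$ has a single child $c$ that is not a leaf, both $\{c\}=\childrenOf{p}$ and $\childrenOf{c}$ are 1-coloured DSCs in addition to $\{p\}$.
\end{itemize}
So your closing claim is wrong on both counts. In the unary case, $\{p\}$ and $\childrenOf{p}$ are not ``still the only candidates'', and deeper singletons cannot be excluded. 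The hypothesis has to be written into the statement, not offered as a fallback.

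You also assume more than you need: it suffices that $p$ itself has at least two children. Any 1-colourable $S\neq\{p\}$ then falls into one of two cases.
\begin{itemize}
\item $S$ is a subset of $\childrenOf{p}$. Then maximality forces $S=\childrenOf{p}$, by your own argument.
\item $S$ is contained in $\downset{c_i}$ for a single child $c_i$ of $p$. This covers a singleton $\{r\}$ with $r\neq p$, and a set of children of some $q\neq p$. Any other child $c_j$ is incomparable to everything in $\downset{c_i}$ by Corollary~\ref{cor:tree_disjoint}, so $S$ is not maximal.
\end{itemize}
This disposes of both of your cases at once, without needing a sibling at every level. In fact the first assertion holds exactly when $|\childrenOf{p}|\ge 2$ or the unique child of $p$ is a leaf.
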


\begin{proof}
With only one colour available, elements must be siblings. Hence, these two maximal antichains are the only possibilities. If the down-set is empty then it's not 1-colourable so we remove it.
\end{proof}
\begin{lemma}[DSC Compositionality]\label{lem:compose_DSC}
Consider a tree poset $\poSet$ and an element $p$ with down-induced poset $\dpoSet{p}$. Then, the number of $\nCol$-coloured DSCs in $\dpoSet{p}$ decomposes as:
\[
|\maxDSC{\dpoSet{p}}| 
= \sum_{\substack{k_1+\dots+k_m=\nCol\\k_i\ge1}} \prod_{i=1}^m\sum_{S\in\maxAntichains{\dpoSet{p_i}}}C_{k_i}(S)
= \sum_{\substack{k_1+\dots+k_m=\nCol\\k_i\ge1}} \prod_{i=1}^m|\mathcal{C}_{k_i}(\dpoSet{p_i})|
\]
\end{lemma}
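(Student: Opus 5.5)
Here $p_1,\dots,p_m$ are the children of $p$. I read the statement as counting DSCs whose maximal antichain lies in $\bigcup_i \downset{p_i}$, i.e.\ antichains other than $\{p\}$. This is the reading under which Corollary \ref{cor:induced_count} separates off the trivial antichain. The plan is to combine Lemma \ref{lem:sigma_bij}, Lemma \ref{lem:DSC_decomp} and Lemma \ref{lem:kDSC_from_S}, and to treat the colour sets used inside each child's subtree as disjoint blocks.

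\textbf{Step 1: antichains.} By Corollary \ref{cor:tree_disjoint}, $\poSet$ is disjoint at $p$. So Lemma \ref{lem:sigma_bij} gives a bijection $\sigma$ from $\prod_i \maxAntichains{\dpoSet{p_i}}$ onto the maximal antichains of $\bigcup_i \dpoSet{p_i}$. Hence every such $S$ is written uniquely as $S=\pi_1\cup\dots\cup\pi_m$ with $\pi_i\in\maxAntichains{\dpoSet{p_i}}$.

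\textbf{Step 2: colours split along the children.} Take a DSC $c$ on $S$. Two elements of $S$ in different subtrees $\downset{p_i}$ and $\downset{p_j}$ have different parents, because distinct down-sets are disjoint in a tree. They are therefore not siblings, so they receive different colours. Every sibling group $Y\in Y(S)$ therefore lies inside a single $\pi_i$. Lemma \ref{lem:DSC_decomp} then lets us regroup $c$ into restrictions $c|_{\pi_i}$ with pairwise disjoint colour sets $\colourSet_i$.

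Let $k_i=|\colourSet_i|$. Each $k_i\ge1$ since $\pi_i\neq\emptyset$, and $\sum_i k_i=\nCol$. Each $c|_{\pi_i}$ is itself a DSC of $\dpoSet{p_i}$ using exactly $k_i$ colours, because the sibling relation is inherited. Conversely, given $k_1+\dots+k_m=\nCol$ and DSCs $c_i\in\mathcal{C}_{k_i}(\dpoSet{p_i})$, we choose disjoint colour sets and take the union. This is a valid DSC on $\sigma(\pi_1,\dots,\pi_m)$, since colours are reused only within a single $\pi_i$ and hence only among siblings.

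\textbf{Step 3: counting.} The count follows from a bijection between DSCs on $\bigcup_i\dpoSet{p_i}$ and tuples consisting of a composition $(k_1,\dots,k_m)$ together with elements of $\prod_i \mathcal{C}_{k_i}(\dpoSet{p_i})$. Summing over compositions gives the second expression. Expanding each factor as $|\mathcal{C}_{k_i}(\dpoSet{p_i})|=\sum_{S\in\maxAntichains{\dpoSet{p_i}}}C_{k_i}(S)$, by Lemma \ref{lem:kDSC_from_S} and Step 1 applied at $p_i$, gives the first expression.

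\textbf{Main obstacle.} The delicate point is the quotient by colour permutations. Working modulo relabelling, one must check that assigning the blocks $\colourSet_i$ to children introduces no extra factor such as a multinomial $\binom{\nCol}{k_1,\dots,k_m}$. The plan is to argue via canonical forms. The colour classes of $c$ form a set partition of $S$ into $\nCol$ blocks, and each block lies inside one $\pi_i$. Such a partition is exactly a tuple of set partitions of the $\pi_i$ into $k_i$ blocks each. The children are ordered, so the tuple $(k_i)$ is determined by $c$, and no further symmetry remains. This is the same convention that produces Stirling numbers, not falling factorials, in Lemma \ref{lem:kDSC_from_S}, so the two sides agree.
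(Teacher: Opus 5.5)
There is a genuine gap in Step 2. You claim that two elements of $S$ in different subtrees $\downset{p_i}$, $\downset{p_j}$ have different parents. This fails when those elements are the children $p_i$ and $p_j$ themselves. Both are covered by $p$, which lies outside every $\downset{p_i}$, so they are siblings. Disjointness of the down-sets only controls parents lying \emph{inside} the subtrees.

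Hence, whenever $S$ contains two or more children of $p$ (each forming its own $\pi_i=\{p_i\}$), the set $S\cap\childrenOf{p}$ is one sibling group spread over several $\pi_i$. A single colour class may then contain several children of $p$. Your map from tuples $\bigl((k_i),(c_i)\bigr)$ to DSCs is therefore injective but not surjective: its image is exactly the DSCs in which distinct children of $p$ get distinct colours.

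Concretely, let $p$ have two leaf children $p_1,p_2$ (or let $p$ be any node of $\mathcal{T}_n^\infty$ with $n\ge 2$). The one-colouring of $S=\{p_1,p_2\}$ is a valid DSC with $S\neq\{p\}$, and Lemma~\ref{lem:base_case_k1} lists it. Yet the right-hand side at $\nCol=1$ is an empty sum over compositions of $1$ into $m\ge 2$ positive parts, hence $0$. The value $f_n(1)=1$ produced by the $u=n$ term $\stirling{n}{1}$ in Lemma~\ref{lem:recurrence} is exactly this missed DSC.

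Your exclusion of $\{p\}$ is sound, as is your argument that no multinomial factor arises; both are more careful than the paper. But the paper's proof makes the same unjustified step (``sibling sets in $S$ are also partitioned according to these posets''). So the identity is false as stated, and no repair of the argument alone can prove it. What you have actually shown is the identity restricted to DSCs in which no two children of $p$ share a colour.

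The correct general decomposition is the upper/lower split of Lemma~\ref{lem:recursive_DSC_at_p}. Put $U=S\cap\childrenOf{p}$ and colour $U$ as a single sibling group in $\stirling{|U|}{\nCol'}$ ways, using colours disjoint from the rest. Then apply your Step 2 verbatim only to the parts $S\cap\downset{p_j}$ for $p_j\notin U$. These are maximal antichains of $\dpoSet{p_j}-\{p_j\}$, where every parent lies inside its own subtree, so there the sibling groups genuinely do not cross subtrees.
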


\begin{proof}
First, observe that by Lemma \ref{lem:sigma_bij} any maximal antichain $S\in\maxAntichains{\dpoSet{p}}$ decomposes uniquely into maximal antichains of each disjoint poset:
\[
S=\bigcup_{i=1}^m S_i,\quad\text{with }S_i\in\maxAntichains{\dpoSet{p_i}} \text{ where } p_i \in \childrenOf{p}.
\]

By Lemma~\ref{lem:kDSC_from_S}, for a fixed antichain $S$ with sibling decomposition $Y(S)=\{Y_j\}$, the number of distinct $\nCol$-coloured DSCs is given by
\[
C_{\nCol}(S)=\sum_{\substack{k_1+\dots+k_{|Y(S)|}=\nCol\\k_j\ge1}}\prod_{j=1}^{|Y(S)|}\stirling{|Y_j|}{k_j}.
\]

Since the posets $\{\dpoSet{p_i} : p_i \in \childrenOf{p}\}$ are disjoint, sibling sets in $S$ are also partitioned according to these posets.
Thus, the DSC counts factor across the decomposition into disjoint posets. 
Specifically, each $C_{\nCol}(S)$ decomposes as follows:
\[
C_{\nCol}(S) 
= \sum_{\substack{k_1+\dots+k_m=\nCol\\k_i\ge1}} \prod_{i=1}^m C_{k_i}(S_i),
\quad\text{where }S_i\in\maxAntichains{\dpoSet{p_i}}.
\]

Summing over all possible maximal antichains $S\in\maxAntichains{\poSet}$, we get:
\[
|\maxDSC{\poSet}| 
= \sum_{S\in\maxAntichains{\poSet}}C_{\nCol}(S)
= \sum_{S_1\in\maxAntichains{\poSet_1}}\dots\sum_{S_m\in\maxAntichains{\poSet_m}}
  \sum_{\substack{k_1+\dots+k_m=\nCol\\k_i\ge1}}\prod_{i=1}^m C_{k_i}(S_i).
\]

Interchanging the order of summation, we collect by colours first:
\[
|\maxDSC{\poSet}| 
= \sum_{\substack{k_1+\dots+k_m=\nCol\\k_i\ge1}}
\prod_{i=1}^m\sum_{S\in\maxAntichains{\dpoSet{p_i}}}C_{k_i}(S).
\]

Finally, note by definition that:
\[
|\mathcal{C}_{k_i}(\dpoSet{p_i})|=\sum_{S\in\maxAntichains{\dpoSet{p_i}}}C_{k_i}(S),
\]
thus obtaining the stated result:
\[
|\maxDSC{\poSet}| 
= \sum_{\substack{k_1+\dots+k_m=\nCol\\k_i\ge1}}\prod_{i=1}^m|\mathcal{C}_{k_i}(\dpoSet{p_i})|.
\]
\end{proof}




\begin{lemma}[Recursive Enumeration of DSCs]\label{lem:recursive_DSC_at_p}
Let $\poSet$ be a poset disjoint at $p$. Then the number of $\nCol$-coloured DSCs at $\inducedPoSet{p}$ is given recursively by:
\[
|\maxDSC{\inducedPoSet{p}}|=\sum_{U\subseteq\childrenOf{p}}\sum_{\nCol'=|U|}^{\nCol}|\mathcal{C}_{\nCol'}(U)|\sum_{\substack{k_1+\dots+k_{m-|U|}=\nCol-\nCol'\\k_j\ge1}}\prod_{p_j\notin U}|
\mathcal{C}_{k_j}({\inducedPoSet{p_j}-\{p_j\}})|.
\]
\end{lemma}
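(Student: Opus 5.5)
We must understand the statement. Here $\inducedPoSet{p}$ denotes the down-induced poset at $p$, and a DSC is either the trivial antichain $\{p\}$ or a maximal antichain avoiding $p$. The formula sums over subsets $U\subseteq\childrenOf{p}$; I read $U$ as the set of children of $p$ that themselves belong to the antichain $S$. The remaining children $p_j\notin U$ are not in $S$, so $S\cap\downset{p_j}$ is a maximal antichain of $\inducedPoSet{p_j}-\{p_j\}$. (The singleton $\{p\}$ and the case $U=\emptyset$ must then be read with the appropriate conventions; I would fix these at the outset, e.g. absorbing $\{p\}$ into a boundary term or noting that the statement counts DSCs strictly below $p$.)

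\textbf{Classify.} Any maximal antichain $S$ of $\inducedPoSet{p}$ other than $\{p\}$ omits $p$. By Corollary \ref{cor:tree_disjoint} and Lemma \ref{lem:sigma_bij}, $S=\bigcup_i S_i$ with $S_i\in\maxAntichains{\inducedPoSet{p_i}}$, uniquely. Each $S_i$ is either $\{p_i\}$ or a maximal antichain of $\inducedPoSet{p_i}-\{p_i\}$. Setting $U=\{p_i: S_i=\{p_i\}\}$ gives a bijection between such $S$ and pairs $(U,(S_j)_{p_j\notin U})$ with $S_j\in\maxAntichains{\inducedPoSet{p_j}-\{p_j\}}$.

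\textbf{Split the colouring.} In $S$, the elements of $U$ form one sibling group, since they share parent $p$. No element of any $S_j$ is a sibling of an element of $U$: its parent lies strictly below $p_j$, and in a tree parents are unique. Likewise, sibling groups of distinct $S_j$ lie in disjoint down-sets. Hence, by Lemma \ref{lem:DSC_decomp}, a $\nCol$-colouring of $S$ is the same as a choice of disjoint colour blocks, $\nCol'$ colours on $U$ (with $|\mathcal{C}_{\nCol'}(U)|=\stirling{|U|}{\nCol'}$ ways, possible only for $\nCol'\le|U|$ as Stirling numbers vanish otherwise) and $k_j\ge1$ colours on each $S_j$, summing to $\nCol$. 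Since everything is counted modulo relabelling, the count factors exactly as in the proof of Lemma \ref{lem:compose_DSC}. Summing over compositions and interchanging sums, as there, gives
\[
\sum_{U}\sum_{\nCol'}|\mathcal{C}_{\nCol'}(U)|\sum_{k_1+\dots+k_{m-|U|}=\nCol-\nCol'}\prod_{p_j\notin U}|\mathcal{C}_{k_j}(\inducedPoSet{p_j}-\{p_j\})|.
\]

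\textbf{Main obstacle.} The hard step is reconciling the summation bounds and conventions with the literal statement. The statement writes $\nCol'$ running from $|U|$ to $\nCol$, but the nonzero contributions come from $1\le \nCol'\le |U|$. I would argue that the displayed range is harmless only if it is reinterpreted: terms outside $[\min(1,|U|),|U|]$ vanish, and the empty-$U$ case contributes with $\nCol'=0$. I would state this convention explicitly, along with the treatment of $\{p\}$, rather than reproduce the bounds verbatim. I would also check the edge cases $\inducedPoSet{p_j}-\{p_j\}=\emptyset$ (leaf children, forcing $p_j\in U$) and the empty composition when $U=\childrenOf{p}$, which contributes $1$ exactly when $\nCol=\nCol'$.
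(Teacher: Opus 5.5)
Your proof is correct and follows essentially the same route as the paper's: split each $S\neq\{p\}$ into the upper set $U(S)$ of children of $p$ lying in $S$ and lower maximal antichains of $\inducedPoSet{p_j}-\{p_j\}$, then colour the two parts independently with disjoint colour blocks, which is the compositionality argument of Lemma~\ref{lem:compose_DSC}. Your caveats are justified: the displayed formula counts only DSCs other than $\{p\}$, and the range of $\nCol'$ must be $0\le\nCol'\le|U|$, since $\stirling{|U|}{\nCol'}$ vanishes for $\nCol'>|U|$; this is exactly the range the paper itself uses in Lemma~\ref{lem:recurrence}. One small slip: an element of $S_j$ has its parent in $\downset{p_j}$, possibly $p_j$ itself rather than strictly below it, but all you need is that this parent is not $p$.
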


\begin{proof}
We start by noting that for a poset $\poSet$ disjoint at $p$, each maximal antichain excluding $\{p\}$ (i.e., $S \in \maxAntichains{\inducedPoSet{p}} - \{p\}$) is partitioned via 
\[\pi(S) = \{S \cap \inducedPoSet{p_i} : p_i \in \childrenOf{p}\}\]
into a unique  \emph{upper} set $U(S) \subseteq \childrenOf{p}$ and a unique \emph{lower} set $L(S) = \{S_j \in \maxAntichains{\inducedPoSet{p_j}-\{p_j\}} : p_j\notin U\}$.
By Lemma~\ref{lem:compose_DSC}, the number of $\nCol$-DSCs decomposes into colouring $U(S)$ and $L(S)$ independently, subject to colour-partition constraints.
Enumerate all subsets $U$ that select elements directly below $p$, forming upper DSCs.
Then, assign colours to $U$ in exactly $|\mathcal{C}_{\nCol'}(U)|$ ways.
Finally, independently assign remaining colours across maximal antichains in lower subtrees, utilizing compositionality across disjoint posets (Lemma~\ref{lem:compose_DSC}).
\end{proof}

\subsubsection{Proof of Thm.~\ref{thm:ogf}}

We consider a poset $\poSet$ whose Hasse diagram $H(\poSet)=\mathcal{T}_n^\infty$ is an infinite $n$-ary tree. For clarity, define $f_n(k)$ as the number of lower-DSCs using exactly $k$ colours.  
Naturally, $f_n(1) = 1$ by evaluating $k=1$ via Lemma \ref{lem:compose_DSC}.
We first derive a recurrence for $f_n(k)$:

\begin{lemma}[DSC Recurrence Relation]\label{lem:recurrence}
The number $f_n(k)$ of DSCs with exactly $k$ colours in an infinite $n$-ary tree satisfies the recurrence:
\[
f_n(k)=\sum_{u=0}^{n}\sum_{k'=0}^u\binom{n}{u}\stirling{u}{k'}\sum_{\substack{k_1+\dots+k_{n-u}=k-k'\\k_j\ge1}}\prod_{j=1}^{n-u}f_n(k_j),
\]
where $\stirling{n}{k}$ is a Stirling number of the second kind.
\end{lemma}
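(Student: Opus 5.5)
We apply Lemma~\ref{lem:recursive_DSC_at_p} at the root $\sourcePkg$ of $\mathcal{T}_n^\infty$. Corollary~\ref{cor:tree_disjoint} guarantees that the poset is disjoint at $\sourcePkg$. A lower DSC is one whose antichain is not $\{\sourcePkg\}$, so every such antichain $S$ splits uniquely, through the partition $\pi(S)$ of Lemma~\ref{lem:recursive_DSC_at_p}, into two parts. The first is the upper set $U(S)\subseteq\childrenOf{\sourcePkg}$ of children that lie in $S$. The second is the family of restrictions $S_j$ for the remaining children $p_j\notin U$, where each $S_j$ is a maximal antichain of $\dpoSet{p_j}-\{p_j\}$. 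The recurrence then comes from evaluating each factor of Lemma~\ref{lem:recursive_DSC_at_p} on the regular tree.

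\textbf{Step 1 (the $U$ factor).} The root has exactly $n$ children. For each size $u=|U|$ there are $\binom{n}{u}$ choices of $U$. All elements of $U$ are siblings, so the singleton-sibling constraint disappears inside $U$. A $k'$-colouring of $U$ up to relabelling is therefore a set partition of $U$ into $k'$ blocks. This is Lemma~\ref{lem:kDSC_from_S} with a single sibling group, giving $|\mathcal{C}_{k'}(U)|=\stirling{u}{k'}$. Letting $k'$ run from $0$ rather than from $u$ changes nothing, because $\stirling{u}{k'}=0$ when $k'>u$ or when $k'=0<u$. The index range should really be $k'\le\min(u,k)$; the author writes $k'\le u$, which is harmless because the inner composition sum is empty when $k-k'<0$.

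\textbf{Step 2 (self-similarity of the lower factor).} For $p_j\notin U$, the poset $\dpoSet{p_j}-\{p_j\}$ is again an infinite $n$-ary tree, with $p_j$ acting as the root, and its maximal antichains are exactly the lower antichains below $p_j$. So the number of $k_j$-coloured DSCs in that piece should equal $f_n(k_j)$. This identification is the main obstacle. One has to check that the object counted by $f_n$, namely lower DSCs of $\mathcal{T}_n^\infty$ (antichains other than $\{\sourcePkg\}$), matches under the isomorphism $\downset{p_j}\cong\mathcal{T}_n^\infty$ what Lemma~\ref{lem:recursive_DSC_at_p} requires: the maximal antichains of the subtree with its top element removed. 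The plan is to show that removing $p_j$ leaves precisely the antichains of $\downset{p_j}$ different from $\{p_j\}$. Those are the lower antichains of the copy, so the counts agree.

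\textbf{Step 3 (assembling).} Colour classes cannot be shared across different sibling groups, and the groups in $U$ and in each $S_j$ are pairwise non-siblings. Lemma~\ref{lem:DSC_decomp} and Lemma~\ref{lem:compose_DSC} therefore say that the colour count splits as $k=k'+\sum_j k_j$ with each $k_j\ge1$, and the number of colourings multiplies across the pieces. Substituting Steps~1 and~2 into Lemma~\ref{lem:recursive_DSC_at_p} and summing over $u$ gives exactly the stated recurrence. The edge cases are automatic. When $u=n$, the inner sum is the empty product, so it equals $1$ if $k=k'$ and $0$ otherwise. This recovers $f_n(1)=1$ consistently with Lemma~\ref{lem:base_case_k1}: the only contribution is the $u=n$, $k'=1$ term, which is the children-of-root antichain in one colour.
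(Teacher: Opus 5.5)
Your argument follows the paper's proof. You apply Lemma~\ref{lem:recursive_DSC_at_p} at a node with $n$ children (the paper takes an arbitrary internal node, you take the root, which is equivalent by regularity). You count the upper set by $\binom{n}{u}\stirling{u}{k'}$ and replace each excluded subtree by $f_n(k_j)$, with the remaining colours split over a composition of $k-k'$.

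One step is wrong as written: the claim that moving the lower limit of $k'$ from $|U|$, as Lemma~\ref{lem:recursive_DSC_at_p} literally states, down to $0$ ``changes nothing''. The vanishing of $\stirling{u}{k'}$ only covers $k'>u$ and $k'=0<u$. For $1\le k'<u$ the Stirling numbers are positive, and those terms are essential. Your own check of $f_n(1)=1$ relies on the term $u=n$, $k'=1<u$. With the literal range $k'\ge|U|$, every term vanishes when $k=1$ and $n\ge 2$, so you would get $f_n(1)=0$, contradicting Lemma~\ref{lem:base_case_k1}. So you cannot get the recurrence by substituting verbatim into that lemma. Treat its lower limit $|U|$ as a typo and take the range from your Step~1 set-partition count, which holds for every $k'$, together with Lemmas~\ref{lem:DSC_decomp} and~\ref{lem:compose_DSC}. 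That is effectively what the paper's proof does.

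A smaller point in Step~2. It is $\downset{p_j}$, not $\downset{p_j}\setminus\{p_j\}$, that is a copy of $\mathcal{T}_n^\infty$; removing $p_j$ leaves a forest. Also, matching the antichains is only half of ``the counts agree''. The sibling relation must be the one inherited from the ambient tree, where the children of $p_j$ are siblings through $p_j$. In the punctured poset on its own they have no common parent. For example, $\childrenOf{p_j}$ in a single colour would not be a DSC there, and the count would no longer be $f_n(k_j)$. With the inherited relation, lower DSCs of $\downset{p_j}$ correspond exactly to lower DSCs of $\mathcal{T}_n^\infty$, and your identification goes through.
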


\begin{proof}
Fix an internal node $p$ with children $\{p_1,\dots,p_n\}$. By Lemma~\ref{lem:recursive_DSC_at_p}, DSCs at $\inducedPoSet{p}$ can be enumerated by considering all partitionings into upper and lower sets $U(S)$ and $L(S)$. 
The case $U(S) = \childrenOf{p}$ (where $u=n$) contributes $\stirling{n}{k}$, as the sum over products for the $n-u=0$ remaining subtrees evaluates to $1$ (following the convention that an empty product is $1$ and the empty sum constraint forces $k'=k$).
For $U(S) \subseteq \childrenOf{p}$, exactly $u$ children are directly included in the antichain. There are $\binom{n}{u}$ ways to select these $u$ children and $\stirling{u}{k'}$ ways to colour them with exactly $k'$ colours. The remaining $(n-u)$ subtrees rooted at the children excluded from the antichain are independently coloured. Letting these colours sum to $(k - k')$ across the subtrees, we have:
    \[
    \sum_{\substack{k_1+\dots+k_{n-u}=k-k'\\k_j\ge1}}\prod_{j=1}^{n-u}f_n(k_j).
    \]

Summing these contributions over all possible $u$ and colour partitions yields the stated recurrence.
\end{proof}

Now, we leverage this recurrence to derive the ordinary generating function (OGF):

\begin{proof}[Proof of Theorem \ref{thm:ogf}]
Define the OGF as
\[
F_n(z)=\sum_{k=0}^{\infty}f_n(k)z^k,\quad\text{with}\quad B_n(z)=\sum_{k=0}^{n}\stirling{n}{k}z^k.
\]

Using the same sum and product conventions as in Lemma~\ref{lem:recurrence}:
\[
f_n(k)=\sum_{u=0}^{n}\sum_{k'=0}^{u}\binom{n}{u}\stirling{u}{k'}\sum_{\substack{k_1+\dots+k_{n-u}=k-k'\\k_j\ge1}}\prod_{j=1}^{n-u}f_n(k_j).
\]

Multiplying by $z^k$ and summing over all $k\ge0$, we obtain the generating function:
\begin{align*}
F_n(z)&=\sum_{u=0}^{n}\binom{n}{u}\sum_{k'=0}^{u}\stirling{u}{k'}z^{k'}\sum_{k\ge k'}\sum_{\substack{k_1+\dots+k_{n-u}=k-k'\\k_j\ge1}}z^{k-k'}\prod_{j=1}^{n-u}f_n(k_j).
\end{align*}

Interchange summations and factor out terms dependent on $k'$:
\[
\sum_{k'=0}^{u}\stirling{u}{k'}z^{k'}=B_u(z).
\]

Observe the inner sum is a convolution of generating functions, which becomes:
\[
\sum_{k\ge k'}\sum_{\substack{k_1+\dots+k_{n-u}=k-k'\\k_j\ge1}}z^{k-k'}\prod_{j=1}^{n-u}f_n(k_j)=F_n(z)^{n-u}.
\]

Thus, the OGF becomes:
\[
F_n(z)=\sum_{u=0}^{n}\binom{n}{u}B_u(z)F_n(z)^{n-u}.
\]

\end{proof}

\subsection{Proof of Thm. \ref{thm:bounds}}

\begin{proposition}\label{prop:implicit}
For fixed $n \ge 2$, the generating function $y = F_n(z)$ is implicitly defined by the bivariate polynomial equation $y = \Phi(z, y)$, where
$$
\Phi(z, y) = \sum_{u=0}^n \binom{n}{u} B_u(z) y^{n - u}
$$
Furthermore, $F_n(z)$ has a dominant square-root singularity $R_n > 0$.
\end{proposition}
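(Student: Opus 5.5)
The first assertion is a restatement of Theorem~\ref{thm:ogf}, so the real content is the singular behaviour. I would verify the hypotheses of the smooth implicit-function schema of analytic combinatorics \cite{flajolet2009analytic} (Thm.~VII.3). That theorem gives a square-root singularity $F_n(z) = \tau - c\sqrt{1-z/R_n} + O(1-z/R_n)$ at a positive real $R_n$, provided a characteristic system has a solution inside the domain of analyticity of $\Phi$.

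\textbf{Basic properties of $\Phi$.}
\begin{itemize}
\item $\Phi$ is a polynomial in $(z,y)$, hence entire.
\item Its coefficients are nonnegative, since binomial coefficients and Stirling numbers of the second kind are nonnegative.
\item Because $B_0(z)=1$ and $B_u(0)=\stirling{u}{0}=0$ for $u\ge1$, we have $\Phi(0,y)=y^n$. Hence $\Phi(0,0)=0$ and, for $n\ge2$, $\Phi_y(0,0)=0$.
\end{itemize}
So the branch $F_n$ with $F_n(0)=0$ (this is $f_n(0)=0$) is uniquely and analytically defined near $0$ by the implicit function theorem, and its coefficients are nonnegative. For $n\ge 2$, $\Phi$ contains $y^n$, so $\Phi_{yy}\not\equiv0$ and the equation is genuinely nonlinear. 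This rules out the degenerate linear (rational/meromorphic) case.

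\textbf{Existence of the characteristic solution.} I need a positive pair $(r,s)$ with
\[
\Phi(r,s)=s,\qquad \Phi_y(r,s)=1.
\]
I would follow the real branch $y(z)=F_n(z)$ for $z\in[0,\infty)$. It is increasing, and by the implicit function theorem it continues analytically as long as $\Phi_y(z,y(z))<1$.

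The branch cannot persist to $z=1$. For $z\ge1$ and $y\ge0$,
\[
\Phi(z,y)-y \;\ge\; y^n - y + B_n(z) \;\ge\; y^n - y + z \;>\; 0,
\]
because $B_n(z)\ge\stirling{n}{1}z = z \ge 1 > y - y^n$ for $y\ge 0$. So no nonnegative fixed point exists there. Therefore there is a first $r=R_n<1$ at which the continuation fails, and this must be a point where $\Phi_y(r,y(r))=1$ with $s=y(r)<\infty$.

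Boundedness of $y$ up to $r$ follows from convexity of $\Phi(z,\cdot)$ in $y$ on $y\ge0$. For fixed $z<r$, the graph of $\Phi(z,\cdot)$ crosses the diagonal at $y(z)$ with slope $<1$. As $z$ increases these crossings stay below the tangency point, which gives both existence and finiteness of $s$. Since $\Phi_{yy}(r,s)>0$ and $\Phi_z(r,s)>0$, the local expansion yields a square root, not a higher-order branch point.

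\textbf{Dominance and the expected obstacle.} The schema also requires aperiodicity to conclude that $R_n$ is the unique dominant singularity on $|z|=R_n$. Here $f_n(1)=1$ and $f_n(2)>0$, for instance from the term $u=2$, $k'=2$ in Lemma~\ref{lem:recurrence}. So the support of $(f_n(k))$ has gcd $1$, and Pringsheim's theorem together with the triangle inequality $|F_n(z)|<F_n(|z|)$ off the positive axis gives uniqueness.

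I expect the main obstacle to be the existence argument for $(r,s)$, specifically making the continuation/convexity step rigorous. The key points are that the first failure of analyticity along $[0,1)$ occurs at a tangency, not by $y$ escaping to infinity, and that it lies in the interior of the (here unbounded) analyticity domain. The bound on $[1,\infty)$ above handles escape to infinity, and entireness of $\Phi$ handles the domain condition.
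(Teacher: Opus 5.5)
Your proposal is correct and takes the same route as the paper (the smooth implicit-function schema, Flajolet--Sedgewick Thm.~VII.3). Unlike the paper, which only asserts that the schema applies, it actually checks the hypotheses: nonnegativity, $\Phi(0,y)=y^n$, a positive solution of the characteristic system via your bound $\Phi(z,y)>y$ for $z\ge1$, and aperiodicity. The boundedness step you flag is also immediate: any nonnegative fixed point satisfies $y=\Phi(z,y)\ge y^n$, so $y\le 1$.

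One slip: the $u=2$, $k'=2$ term of Lemma~\ref{lem:recurrence} is zero for $n\ge3$, since it needs $n-2\ge1$ positive parts summing to $0$. Witness $f_n(2)>0$ instead by the $u=n$, $k'=2$ term, which equals $\stirling{n}{2}=2^{n-1}-1\ge1$.
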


\begin{proof}
Because the Touchard polynomials $B_u(z)$ are polynomials in $z$, the function $\Phi(z, y)$ is a bivariate polynomial. The equation $y = \Phi(z, y)$ demonstrates that $F_n(z)$ is an algebraic function. 

Since the coefficients of $F_n(z)$ enumerate DSCs, they are strictly non-negative, meaning $\Phi(z, y)$ has non-negative coefficients. By the Smooth Implicit-Function Schema (Theorem VII.3 in Flajolet and Sedgewick \cite{flajolet2009analytic}), the solution $y = F_n(z)$ possesses a dominant singularity $R_n > 0$ on the positive real axis. This singularity is a branch point of order 2, meaning the local expansion of $F_n(z)$ near $R_n$ takes the form:
$$
F_n(z) = y_0 - c \sqrt{1 - \frac{z}{R_n}} + O\left(1 - \frac{z}{R_n}\right)
$$
for some constants $y_0 > 0$ and $c > 0$.
\end{proof}

\begin{lemma}[Lower bound for $f_n(k)$]\label{lem:lower_bound}
For every fixed $n \ge 2$, there exists a constant $A_n > 0$ such that
$$
f_n(k) \ge A_n n^k k^{n^2-1} \quad (\forall k \ge n).
$$
\end{lemma}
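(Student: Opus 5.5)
The plan is to bootstrap the lower bound from the recurrence of Lemma~\ref{lem:recurrence}. Every term on its right-hand side is non-negative, so discarding terms always gives a valid lower bound. The exponent $n^2-1$ should come out of two passes: a first step yields a pure exponential bound $n^k$, one pass through the $u=0$ (full convolution) term upgrades it to $n^k k^{n-1}$, and a second pass through the same term gives $n^k k^{n(n-1)+(n-1)} = n^k k^{n^2-1}$.

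\emph{Step 1 (exponential base).} In Lemma~\ref{lem:recurrence} keep only the term with $u=n-1$ and $k'=1$. Since $\binom{n}{n-1}=n$ and $\stirling{n-1}{1}=1$, and the single remaining subtree receives $k-1$ colours, this gives $f_n(k)\ge n\,f_n(k-1)$ for $k\ge 2$. Together with $f_n(1)=1$ we obtain $f_n(k)\ge n^{k-1}$ for all $k\ge1$.

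\emph{Step 2 (first polynomial gain).} Keep only the $u=0$ term, for which $B_0=1$ and $k'=0$:
\[
f_n(k)\ \ge\ \sum_{\substack{k_1+\dots+k_n=k\\ k_j\ge1}}\prod_{j=1}^n f_n(k_j)\ \ge\ \sum_{\substack{k_1+\dots+k_n=k\\ k_j\ge1}} n^{k-n}=n^{k-n}\binom{k-1}{n-1}.
\]
For $k\ge n$ we have $\binom{k-1}{n-1}\ge (k/n)^{n-1}$ up to a constant depending only on $n$. Hence $f_n(k)\ge c_n n^k k^{n-1}$ for all $k\ge n$, with some $c_n>0$.

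\emph{Step 3 (second gain).} Use the $u=0$ term once more, but now restrict the compositions to those with every $k_j\ge n$, so that the bound from Step 2 applies to each factor. This gives
\[
f_n(k)\ \ge\ c_n^{\,n} n^{k}\sum_{\substack{k_1+\dots+k_n=k\\ k_j\ge n}}\prod_{j=1}^n k_j^{\,n-1}.
\]
The remaining sum is a lattice-point version of the Dirichlet integral
\[
\int_{x_1+\dots+x_n=k}\prod_j x_j^{n-1}\,dx \;=\; \frac{\Gamma(n)^n}{\Gamma(n^2)}\,k^{n^2-1}.
\]
For example, restrict to compositions with every $k_j\ge k/(2n)$, which are compatible with $k_j\ge n$ once $k\ge 2n^2$. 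There are $\Omega_n(k^{n-1})$ such compositions, and each contributes at least $(k/(2n))^{n(n-1)}$. So the sum is at least $a_n k^{n(n-1)+n-1}=a_n k^{n^2-1}$ for all $k\ge 2n^2$.

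\emph{Finite range.} The remaining indices $n\le k<2n^2$ are finitely many, and $f_n(k)\ge n^{k-1}>0$ for each of them by Step 1. Shrinking the constant to
\[
A_n=\min\Bigl(c_n^{\,n}a_n,\ \min_{n\le k<2n^2} f_n(k)\,n^{-k}k^{1-n^2}\Bigr)
\]
therefore gives the bound for all $k\ge n$.

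\emph{Expected obstacle.} The main technical point is Step 3: the counting estimate for the restricted composition sum has to be uniform, with constants depending only on $n$. Restricting to the "balanced" box $k_j\ge k/(2n)$ is the simplest way to get it. The only other thing to check is that $\stirling{n-1}{1}=1$ and $B_0=1$ are used correctly under the paper's empty-sum conventions.
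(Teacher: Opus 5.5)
Your proof is correct and follows the paper's strategy. The $u=n-1$, $k'=1$ term of Lemma~\ref{lem:recurrence} gives $f_n(k)\ge n\,f_n(k-1)$ and hence the base $n^k$, and the $u=0$ convolution term then bootstraps the polynomial factor $k^{n^2-1}$. Your intermediate pass (Step 2, giving $c_n n^k k^{n-1}$) is in fact a step the paper's write-up skips: inserting only the pure exponential bound $E_n n^{k_j}$ yields $E_n^n n^k\binom{k-1}{n-1}$, not the factor $\prod_j k_j^{n-1}$ the paper claims. Your balanced-box count, together with the restriction $k_j\ge n$ and the explicit finite range $n\le k<2n^2$, also replaces the appendix lemma (which is stated for exponent $n$, not $n-1$) and the paper's appeal to ``sufficiently large $k$''.
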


\begin{proof}
We split the recurrence from Lemma \ref{lem:recurrence} into two principal pieces:
$$
f_n(k) \ge f_n^{(a)}(k) + f_n^{(b)}(k),
$$
where $f_n^{(a)}(k)$ is the term corresponding to $u=n-1$, and $f_n^{(b)}(k)$ is the term where $u=0$:
$$
f_n^{(b)}(k) := \sum_{\substack{k_1+\dots+k_n=k\\k_j\ge1}} \prod_{j=1}^n f_n(k_j).
$$
The evaluation of the $k'=1$ sub-case in $f_n^{(a)}(k)$ yields $f_n^{(a)}(k) \ge n f_n(k-1)$, which immediately implies $f_n(k) \ge E_n n^k$ for some constant $E_n > 0$ and all $k \ge n$. Inserting this base exponential bound into $f_n^{(b)}(k)$, we obtain:
$$
f_n^{(b)}(k) \ge E_n^n n^k \sum_{\substack{k_1+\dots+k_n=k\\k_j\ge1}} \prod_{j=1}^n k_j^{n-1}.
$$
The inner sum over the composition is exactly the form evaluated in Lemma \ref{lem:powered-composition} (Appendix \ref{app:bin}), which scales as $\Omega(k^{n^2-1})$. Thus, $f_n^{(b)}(k) \ge C_n n^k k^{n^2-1}$.
The exponent $n^2-1$ in the $k$-factor strictly dominates any polynomial contribution originating from $f_n^{(a)}(k)$, ensuring the combined lower bound holds asymptotically for all sufficiently large $k$.
\end{proof}

\begin{proposition}[Analytic Asymptotic Growth]\label{prop:analytic_upper}
Let $R_n$ denote the dominant radius of convergence of the ordinary generating function $F_n(z) = \sum_{k=0}^\infty f_n(k)z^k$. There exists a constant $D_n = 1/R_n > 0$ such that:
$$
f_n(k) \sim \frac{c}{2\sqrt{\pi}} k^{-3/2} D_n^k.
$$
\end{proposition}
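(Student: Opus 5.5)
The plan is to apply the standard transfer theorem of singularity analysis (Flajolet--Odlyzko, Theorem VI.3/VI.4 in \cite{flajolet2009analytic}) to the local expansion supplied by Proposition~\ref{prop:implicit}. That proposition gives a dominant branch point at $z=R_n$ with
\[
F_n(z) = y_0 - c\sqrt{1 - z/R_n} + O\!\left(1 - z/R_n\right).
\]
Setting $D_n = 1/R_n$ and extracting coefficients then yields the stated asymptotic. The key input is the standard identity
\[
[z^k](1-z/R_n)^{1/2} \sim \frac{D_n^k k^{-3/2}}{\Gamma(-1/2)},
\qquad \Gamma(-1/2) = -2\sqrt{\pi}.
\]
The term $-c\sqrt{1-z/R_n}$ therefore contributes $\frac{c}{2\sqrt{\pi}} k^{-3/2} D_n^k$. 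The constant $y_0$ contributes nothing for $k\ge 1$. The $O(1-z/R_n)$ remainder is a polynomial plus a term of order $(1-z/R_n)^{3/2}$ in the smooth implicit-function schema, so its coefficients are $O(D_n^k k^{-5/2})$, which is of lower order.

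First, I verify the hypotheses needed for the transfer. I need $\Delta$-analyticity of $F_n$ at $R_n$, meaning analytic continuation to a domain slightly beyond the circle $|z|=R_n$ with a notch at $R_n$, and the absence of any other singularity on that circle. Analyticity away from $z=R_n$ on the circle follows from aperiodicity of the coefficients. By Lemma~\ref{lem:recurrence}, the $u=n-1$, $k'=1$ term gives $f_n(k)\ge n f_n(k-1)>0$ for all $k\ge 1$, so $f_n(k)>0$ for every $k$ and the support is aperiodic. The aperiodic clause of Theorem VII.3 then gives uniqueness of the dominant singularity and the $\Delta$-continuation.

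Second, I check that $R_n$ is finite and positive, so that $D_n$ is well defined. Finiteness comes from Lemma~\ref{lem:lower_bound}: since $f_n(k)\ge A_n n^k k^{n^2-1}$, we get $\limsup f_n(k)^{1/k}\ge n$, hence $R_n\le 1/n$. Positivity comes from the implicit-function schema itself: $\Phi$ is a polynomial with $\Phi(0,0)=0$ and $\Phi_y(0,0)=0$, since the coefficient of $y$ at $z=0$ is $nB_{n-1}(0)=0$ for $n\ge2$, so the analytic implicit function theorem gives $R_n>0$. I also confirm $c\neq 0$ through the schema's formula $c=\sqrt{2R_n\Phi_z/\Phi_{yy}}$ evaluated at $(R_n,y_0)$. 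This requires $\Phi_{yy}>0$, which holds because $\Phi$ contains the term $y^n$ with $n\ge2$.

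The main obstacle is the schema's characteristic system: I must exhibit $(R_n,y_0)$ inside the analyticity domain with $y_0=\Phi(R_n,y_0)$ and $\Phi_y(R_n,y_0)=1$. Because $\Phi$ is a polynomial with nonnegative coefficients, it is entire, so the only work is existence. For this I consider $y\mapsto \Phi(z,y)-y$ on $[0,\infty)$ as $z$ increases. The function $\Phi(z,y)/y$ is eventually increasing in $y$ and grows in $z$, so for small $z$ the equation has a smallest positive root, and these roots merge tangentially at a first value $z=R_n$. That merging point is the required solution.

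Finally, I note that the strict inequality $D_n>n$ requires slightly more than the bound from Lemma~\ref{lem:lower_bound}, which only gives $D_n\ge n$. One way is to observe that $f_n(k)\ge A_n n^k k^{n^2-1}$ is incompatible with $f_n(k)\sim C n^k k^{-3/2}$, which rules out $D_n=n$. The result $f_n(k)=\Theta(D_n^k k^{-3/2})$ in Theorem~\ref{thm:bounds} then follows.
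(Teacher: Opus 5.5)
Your proposal is correct and follows the paper's route: you take the square-root expansion $F_n(z)=y_0-c\sqrt{1-z/R_n}+O(1-z/R_n)$ from Proposition~\ref{prop:implicit} and transfer it to coefficients using $\Gamma(-1/2)=-2\sqrt{\pi}$. Your extra checks simply supply the smooth implicit-function schema hypotheses that the paper's Proposition~\ref{prop:implicit} asserts without verification: aperiodicity from $f_n(k)\ge n f_n(k-1)$ with $f_n(1)=1$, $\Phi(0,0)=\Phi_y(0,0)=0$, solvability of the characteristic system, and $c\neq 0$. One small correction: $f_n(0)=0$, so positivity holds only for $k\ge 1$, which is all aperiodicity needs.
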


\begin{proof}
By Proposition \ref{prop:implicit}, the generating function $F_n(z)$ has a dominant square-root singularity at $z = R_n$, with the local singular expansion being $-c\sqrt{1 - z/R_n}$. By the Transfer Theorem for algebraic functions (Theorem VII.8 in \cite{flajolet2009analytic}), the asymptotic behavior of the coefficients $f_n(k)$ is determined directly by this singular term.

We translate the local expansion $c(1 - z/R_n)^{1/2}$ into an asymptotic equivalence for the coefficients:
$$
f_n(k) \sim [z^k] \left( -c \left(1 - \frac{z}{R_n}\right)^{1/2} \right).
$$
Applying the standard asymptotic expansion for binomial coefficients $\binom{1/2}{k}$, we obtain the exact polynomial modifier $\beta = -3/2$:
$$
f_n(k) \sim \frac{-c}{\Gamma(-1/2)} k^{-3/2} R_n^{-k}.
$$
Since $\Gamma(-1/2) = -2\sqrt{\pi}$ and defining the exponential growth constant as $D_n = 1/R_n$, the signs cancel out, yielding the strict asymptotic:
$$
f_n(k) \sim \frac{c}{2\sqrt{\pi}} k^{-3/2} D_n^k.
$$
\end{proof}

\begin{proof}[Proof of Theorem \ref{thm:bounds}]
From Lemma \ref{lem:lower_bound}, we established the polynomial lower bound $f_n(k) \ge A_n n^k k^{n^2-1}$ for all $k \ge n$. From Proposition \ref{prop:analytic_upper}, the exact asymptotic growth is established as $f_n(k) \sim C_n k^{-3/2} D_n^k$, where $C_n = \frac{c}{2\sqrt{\pi}}$ and $D_n = 1/R_n$.

There exists a constant $B_n > 0$ such that for all sufficiently large $k$, we can bound the sequence above by:
$$
f_n(k) \le B_n k^{-3/2} D_n^k.
$$

Chaining the lower and upper bounds together, for sufficiently large $k$, we must have:
$$
A_n n^k k^{n^2-1} \le B_n k^{-3/2} D_n^k.
$$

Rearranging the terms to isolate the exponential and polynomial bases yields:
$$
\left(\frac{D_n}{n}\right)^k \ge \frac{A_n}{B_n} k^{n^2 + 1/2}.
$$
Because $n \ge 2$, the exponent $n^2 + 1/2$ is strictly positive, meaning the right-hand side is a polynomial that diverges to infinity as $k \to \infty$. For the inequality to hold asymptotically, the exponential base on the left-hand side must be strictly greater than 1. Therefore, it is required that $D_n > n$.
Thus, the sequence exhibits purely exponential growth with a polynomial modifier of $\beta = -3/2$, giving:
$$
f_n(k) = \Theta(D_n^k k^{-3/2}),
$$
where $D_n > n$, completing the proof.
\end{proof}

\subsection{Numerical Results}

In Figure \ref{fig:asymptotics}, we provide numerical convergence plots supporting our analysis.

\begin{figure*}[t]
    \centering
    \noindent\makebox[\textwidth][c]{%
        \includegraphics[width=\textwidth]{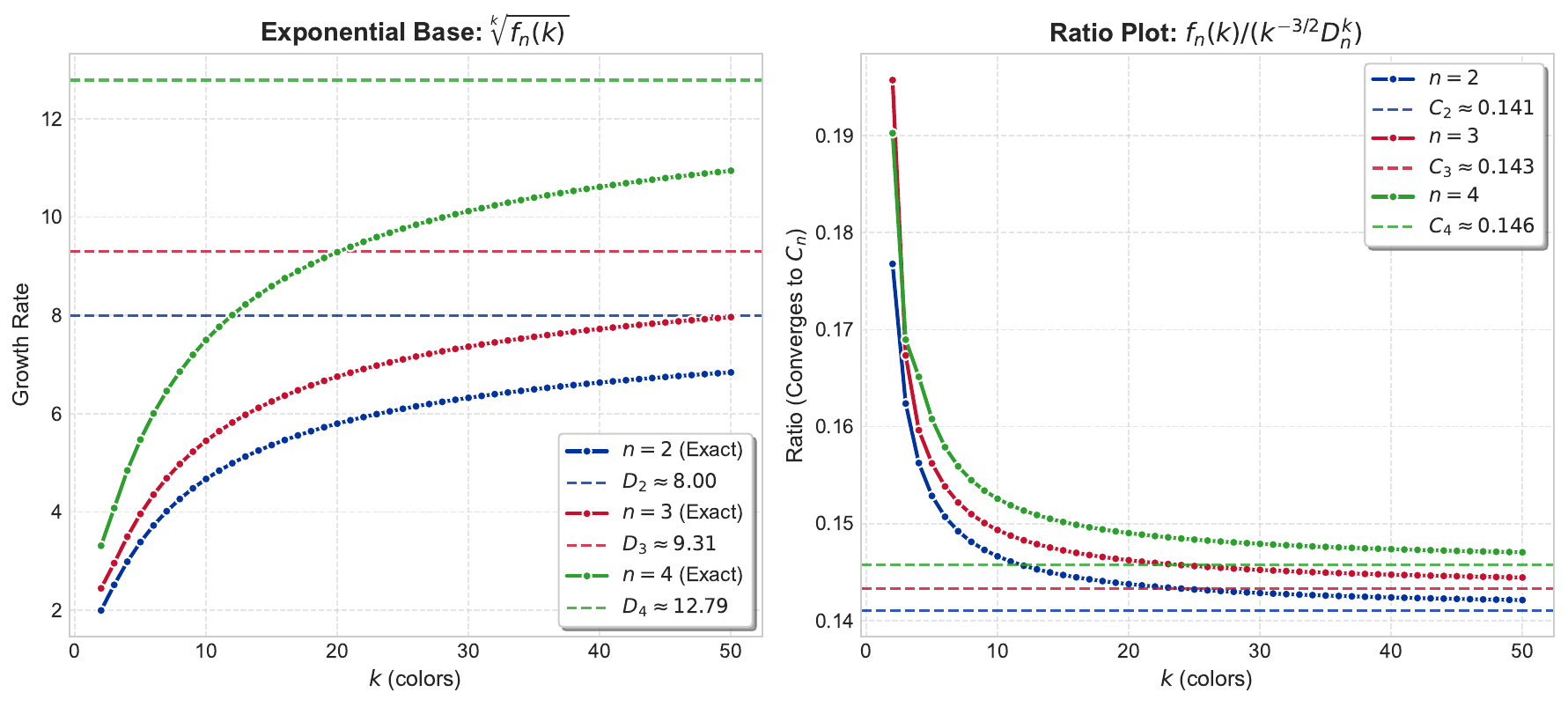}%
    }
    \caption{Empirical verification of the asymptotic bounds for Decorated Sweep Covers in $n$-ary trees. 
    \textbf{Left:} The $k$-th root of the sequence converges to the exponential base $D_n$. The crossing of the dotted structural branching factor lines empirically confirms $D_n > n$. 
    \textbf{Right:} The ratio of the exact sequence $f_n(k)$ to its theoretical shape $k^{-3/2}D_n^k$. The strict horizontal convergence to $C_n$ numerically supports the asymptotic expansion $f_n(k) \sim C_n k^{-3/2} D_n^k$.}
    \label{fig:asymptotics}
\end{figure*}

\section{Conclusion}

In this paper, we introduced and formalized decorated sweep-covers (DSCs) to capture the structured colouring of maximal antichains.
Motivated by the natural constraints of distributed computing, multi-agent planning, and pursuit-evasion scenarios, we provided a rigorous combinatorial framework for analyzing these structures. 
By exploiting the inherent disjointness of tree posets, we established a decomposition theorem that reduces the complex global colouring of antichains into independent, compositional sub-problems.

Our primary enumerative achievement is the derivation of the ordinary generating function for the number of $k$-coloured lower DSCs in an infinite $n$-ary tree, defined implicitly via Touchard polynomials:
$$F_n(z) = \sum_{u=0}^n \binom{n}{u} B_u(z) F_n(z)^{n - u}$$
Because exact coefficient extraction is obstructed by the algebraic complexity of this functional equation, we utilized singularity analysis techniques from analytic combinatorics to deduce the asymptotic behavior of the sequence.
We proved that the coefficients $f_n(k)$ exhibit purely exponential growth modified by a strict polynomial factor, yielding $f_n(k) = \Theta(D_n^k k^{-3/2})$, where the exponential base $D_n$ strictly exceeds the structural branching factor $n$. To support this asymptotic analysis, we also introduced a novel bounding theorem for binomial products over integer compositions.

\subsection{Directions for Future Research}
The theoretical foundation laid in this work opens several avenues for both combinatorial and applied extensions:
While our analysis focused on infinite regular $n$-ary trees, extending this framework to finite bounded trees, asymmetric trees, or random Galton-Watson trees would provide a more generalized view of DSC growth in unpredictable environments.
The exponential growth constant $D_n = 1/R_n$ is currently defined implicitly by the dominant singularity of $F_n(z)$. Further algebraic investigation into this singularity could yield closed-form approximations or exact limits for $D_n$ as $n \to \infty$.
With the asymptotic combinatorial complexity of DSCs established, future work can directly apply these bounds to optimize search space pruning in distributed scheduling algorithms and limit the state-space in bounded packaging games. 

\pagebreak

\section{Appendix}
\subsection{Bounds on Compositions}\label{app:bin}

\begin{lemma}[Polynomial lower bound for powered composition products]\label{lem:powered-composition}
Let
\[
S_{n}(k)=\sum_{\substack{k_{1}+\dots +k_{n}=k\\ k_{j}\ge 1}} \bigl(k_{1}k_{2}\cdots k_{n}\bigr)^{n},\qquad n\ge 1,\;k\ge n.
\]
Then there exists a constant $c_{n}>0$ (depending only on $n$) such that
\[
S_{n}(k) \ge c_{n} k^{n(n+1)-1} \quad\text{for all }k\ge n.
\]
\end{lemma}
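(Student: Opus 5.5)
The plan is to bound $S_n(k)$ from below by keeping only a "bulk" family of compositions in which every part is of order $k$. Each such composition then contributes about $k^{n^2}$, and there are about $k^{n-1}$ of them, giving $k^{n^2+n-1}=k^{n(n+1)-1}$. Since every summand is nonnegative, discarding the other compositions only lowers the sum.

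\emph{Large $k$.} Assume first $k\ge 2n$ and set $m=\lfloor k/(2n)\rfloor\ge 1$.
\begin{itemize}
\item Consider compositions with $k_j\ge m$ for $j=1,\dots,n-1$, each $k_j\le k/(2n)+m$, and $k_n$ determined by $k_n=k-\sum_{j<n}k_j$.
\item The first $n-1$ parts can be chosen freely from an integer interval of length about $k/(2n)$. Their sum is at most $(n-1)k/n<k$, so $k_n\ge k/n\ge 1$ and the composition is valid.
\item Every part satisfies $k_j\ge m\ge k/(4n)$ (using $\lfloor x\rfloor\ge x/2$ for $x\ge1$). Hence $(k_1\cdots k_n)^n\ge (k/(4n))^{n^2}$.
\item The number of such tuples is at least $(m+1)^{n-1}\ge (k/(4n))^{n-1}$.
\end{itemize}
Multiplying these gives $S_n(k)\ge (4n)^{-(n^2+n-1)}k^{n(n+1)-1}$. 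The only care needed here is checking that $k_n$ stays positive and that $m\ge1$, which is why the bulk argument is restricted to $k\ge 2n$.

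\emph{Small $k$.} For the finitely many $k$ with $n\le k<2n$, the sum contains at least one composition (for example $(1,\dots,1,k-n+1)$), so $S_n(k)\ge 1$. Thus $S_n(k)\ge (2n)^{-(n(n+1)-1)}k^{n(n+1)-1}$ in this range. Taking $c_n$ to be the minimum of the two constants covers all $k\ge n$.

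\emph{Use in Lemma~\ref{lem:lower_bound}.} That lemma uses the variant with exponent $n-1$ instead of $n$. The same argument with exponent $p$ gives $\Omega(k^{np+n-1})$. For $p=n-1$ this is $k^{n^2-1}$, which is the exponent the lemma claims. I would state the general-$p$ version so it can be cited there directly.

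I expect no real obstacle. The step needing the most attention is the boundary bookkeeping: keeping the $n-1$ free parts in a range wide enough to give $\Theta(k)$ choices each, while still forcing $k_n\ge\Theta(k)$.
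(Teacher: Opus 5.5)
Your proof is correct, and it takes a genuinely different route from the paper's.

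\textbf{The paper's route.} It argues via generating functions. It writes $\sum_{m\ge1}m^{n}x^{m}=xP_{n}(x)/(1-x)^{n+1}$ with $P_n$ the Eulerian polynomial and raises this to the $n$-th power. It then extracts $[x^{k}]$ as a nonnegative combination of binomial coefficients and keeps the $j=0$ term $\binom{k-n+n(n+1)-1}{n(n+1)-1}\asymp k^{n(n+1)-1}$. The paper's displayed binomial drops the $-n$ from the upper index; as printed it would give the wrong exponent.

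\textbf{Your route.} You keep only the ``bulk'' compositions in which every part is at least $m=\lfloor k/(2n)\rfloor\ge k/(4n)$. There are $(m+1)^{n-1}$ of them, each contributing at least $(k/(4n))^{n^{2}}$. The bookkeeping checks out. The interval $[m,m+k/(2n)]$ contains exactly the $m+1$ integers $m,\dots,2m$. The first $n-1$ parts sum to at most $(n-1)k/n$, so $k_n\ge k/n\ge m$. The range $n\le k<2n$ is handled by $S_n(k)\ge1$.

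\textbf{What each buys.} The generating-function route gives more precision: an exact finite formula, hence the sharp asymptotic $S_n(k)\sim (n!)^{n}k^{n(n+1)-1}/(n(n+1)-1)!$ and a matching upper bound. Your route gives:
\begin{itemize}
\item an explicit constant $c_n=(4n)^{-(n(n+1)-1)}$ valid uniformly for all $k\ge n$, whereas the paper only argues asymptotically and leaves the finite range implicit;
\item no reliance on Eulerian-number identities;
\item robustness: it uses only a lower bound on each factor where all parts are of order $k$, so it works verbatim for any real exponent $p\ge0$, or any summand bounded below by $c\prod_j k_j^{p}$.
\end{itemize}

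\textbf{Your closing remark.} It is right and worth keeping. Lemma~\ref{lem:lower_bound} cites this lemma for $\prod_j k_j^{\,n-1}$ and needs $k^{n^{2}-1}$. The lemma as stated (exponent $n$) instead gives $k^{n^{2}+n-1}$. Your general-$p$ bound $k^{np+n-1}$ with $p=n-1$ is exactly what is needed. The paper's method would also adapt, with $(1-x)^{-n(p+1)}$ in place of $(1-x)^{-n(n+1)}$.
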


\begin{proof}
For a single summand $k_{j}^n$, we use the ordinary generating function (OGF):
\[
A_{n}(x)=\sum_{m\ge1} m^{n}x^{m}=\frac{x P_{n}(x)}{(1-x)^{n+1}},
\]
where $P_{n}(x)$ is a polynomial of degree $n-1$ whose coefficients are the Eulerian numbers. Because the $n$ parts are independent, the OGF of the convolution $S_n(k)$ is the Cauchy product:
\[
\bigl[A_{n}(x)\bigr]^{n}=\frac{x^{n} P_{n}(x)^{n}}{(1-x)^{n(n+1)}}.
\]
Writing $P_{n}(x)^{n}=\sum_{j=0}^{n(n-1)}a_{n,j}x^{j}$ and extracting the coefficient of $x^{k}$ using the negative binomial series identity gives:
\[
S_{n}(k)=\sum_{j=0}^{n(n-1)}a_{n,j}\binom{k+n(n+1)-1-j}{k-n-j}.
\]
Since all $a_{n,j} \ge 0$, the $j=0$ term provides a strict lower bound. Applying the standard asymptotic $\binom{k+m}{m} \sim \frac{k^m}{m!}$ as $k \to \infty$ shows that the leading term scales exactly as $k^{n(n+1)-1}$, verifying the polynomial lower bound.
\end{proof}

\subsection{Proof of Theorem \ref{thm:bin}}\label{app:thm2}

\begin{lemma}[Exact bounds on binomial coefficients]\label{lem:bin_bounds}
Let $1 \le k \le n$. Then
$$
\left(\sqrt{2\pi n}\right)^k \left(\frac{n}{ek}\right)^{kn} \le \prod_{i=1}^n \binom{i}{k} \le (\sqrt{2\pi})^{k-n} \left(\frac{n}{k}\right)^{kn+k/2-n/2}.
$$
\end{lemma}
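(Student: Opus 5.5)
The natural route is to pass to logarithms and reduce the product of binomial coefficients to a product of factorials, to which Stirling-type bounds apply. One caveat comes first: for $i<k$ we have $\binom{i}{k}=0$, so for $k\ge 2$ the product as literally written vanishes and the lower bound cannot hold. I will therefore read the product as running over the nonzero factors $i=k,\dots,n$. The case $k=1$ needs no such reading, since then all factors are nonzero. I expect settling this indexing, and checking that the stated constants survive it, to be the main obstacle.

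\emph{Step 1 (telescoping).} Using $\binom{i}{k}=\frac{i!}{k!\,(i-k)!}$, the product collapses:
\[
\prod_{i=k}^{n}\binom{i}{k}=\frac{\prod_{i=k}^{n} i!}{(k!)^{\,n-k+1}\prod_{j=0}^{n-k} j!}
=\frac{\prod_{i=n-k+1}^{n} i!}{(k!)^{\,n-k+1}} .
\]
The numerator is then a product of only $k$ factorials, each of argument between $n-k+1$ and $n$. This is where the $k$-th power of $\sqrt{2\pi n}$ in the lower bound should come from.

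\emph{Step 2 (lower bound).} Each factor in the numerator gets Robbins' lower bound $m!\ge\sqrt{2\pi m}\,(m/e)^m$. The denominator is handled elementwise with $\binom{i}{k}\ge (i/k)^k$, i.e.\ I compare $i!/(i-k)!\ge i^k\cdots$ against $k!\le k^k$. Concretely, I would prove $\log\prod\binom{i}{k}\ge k\sum_i\log(i/k)$ and then bound $\sum_{i}\log i$ below by $\log n!$ minus the missing small terms. I would then apply Stirling to $n!$ to obtain $k\big(n\log n-n+\tfrac12\log(2\pi n)\big)-kn\log k$. This is exactly $\log\big[(\sqrt{2\pi n})^k (n/(ek))^{kn}\big]$, up to the contribution of the indices $i<k$. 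Those indices must be absorbed, for instance by noting $(i/k)^k\le 1$, so that dropping them from $\sum_{i=1}^n$ only increases the sum. This absorption is the delicate bookkeeping step.

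\emph{Step 3 (upper bound).} Here I would use $\binom{i}{k}\le i^k/k!$ together with $k!\ge\sqrt{2\pi k}\,(k/e)^k$, or directly the telescoped form with the upper Robbins bound $m!\le\sqrt{2\pi m}\,(m/e)^m e^{1/(12m)}$. I would then collect the powers of $n$, $k$, $e$ and $\sqrt{2\pi}$. The target exponent $kn+k/2-n/2$ and the factor $(\sqrt{2\pi})^{k-n}$ suggest the following bookkeeping: $k$ factorials in the numerator each contribute $\sqrt{2\pi}$, and about $n$ factors of $k!$ in the denominator each contribute $(\sqrt{2\pi})^{-1}$. The $e$-powers should cancel, because the total factorial degree in the numerator, roughly $kn$, matches the denominator's $k(n-k+1)$ plus the shift. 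I would bound each numerator $m!$ by the corresponding expression in $n$, using monotonicity of $m\mapsto m^{m+1/2}e^{-m}$. Finally I would verify the residual $e$- and $k$-dependent factors are $\le 1$ for $1\le k\le n$, checking the edge cases $k=1$ and $k=n$ by hand.
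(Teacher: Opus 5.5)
Your lower bound is correct and follows the paper: use $\binom{i}{k}\ge(i/k)^k$ for $i\ge k$, then Robbins' $n!\ge\sqrt{2\pi n}\,(n/e)^n$. Justify the first inequality factorwise by $\frac{i-j}{k-j}\ge\frac{i}{k}$. Your gloss comparing $i!/(i-k)!$ with $i^k$ goes the wrong way, since $i!/(i-k)!\le i^k$. You are right that the literal product vanishes for $k\ge 2$, and right that the factors $(i/k)^k\le 1$ with $i<k$ can be discarded. This repairs a step the paper skips: it applies $(i/k)^k\le\binom{i}{k}$ to all $i$, which fails for $i<k$.

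The genuine gap is Step~3. No bookkeeping can produce the upper bound, because it is false.
\begin{itemize}
\item At $k=1$, $n=2$ (your reading and the literal one agree), the product is $2$ while the right-hand side is $2^{3/2}/\sqrt{2\pi}\approx 1.13$.
\item For $k=1$ in general the claim reads $n!\le(2\pi)^{(1-n)/2}n^{(n+1)/2}$, which is false for every $n\ge 2$.
\item Under your reading it also fails at $k=2$, $n=5$: $180$ versus $\approx 153.2$.
\item For fixed $k$, the ratio of product to bound grows roughly like $(n/k^2)^{n/2}$.
\end{itemize}
So no reading rescues the whole statement. Taken literally, the lower bound fails for $k\ge 2$ and the upper bound fails for $k=1$, $n\ge 2$.

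Here is exactly where your plan breaks. Combining $\binom{i}{k}\le i^k/k!$ with Stirling/Robbins gives
\[
(n!)^k/(k!)^n\le(\sqrt{2\pi})^{k-n}\,n^{kn+k/2}\,k^{-kn-n/2}\,e^{k/(12n)}.
\]
This exceeds the stated right-hand side by the factor $n^{n/2}k^{k/2-n}e^{k/(12n)}$. That factor is not bounded by $1$; at $k=1$ it equals $n^{n/2}e^{1/(12n)}$. So your closing step, ``verify the residual factors are $\le 1$'', is where the argument fails, and your planned hand-check at $k=1$ would have exposed it.

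The paper's proof has the same defect, hidden in ``Applying Stirling's approximation \ldots isolates the remaining bounds''. The exponent $kn+k/2-n/2$ on $n/k$ appears to be an algebra slip for $n^{kn+k/2}k^{-kn-n/2}$.

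Separately, your Step~1 identity is wrong for $k\ge 3$. The correct telescoped form is $\prod_{i=n-k+1}^{n} i!\,\big/\,\bigl((k!)^{n-k+1}\prod_{j=0}^{k-1} j!\bigr)$. At $k=n=3$ your formula gives $2$ instead of $\binom{3}{3}=1$.
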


\begin{proof}
We begin with the standard algebraic bounds $(i/k)^k \le \binom{i}{k} \le i^k/k!$. For the lower bound:
$$
\prod_{i=1}^n \binom{i}{k} \ge \prod_{i=1}^n \left(\frac{i}{k}\right)^k = \left(\frac{1}{k}\right)^{kn} (n!)^k.
$$
Applying the exact non-asymptotic factorial bound $n! \ge \sqrt{2\pi n}(n/e)^n$ yields the desired left-hand side. For the upper bound:
$$
\prod_{i=1}^n \binom{i}{k} \le \prod_{i=1}^n \frac{i^k}{k!} = \frac{(n!)^k}{(k!)^n}.
$$
Applying Stirling's approximation to the right-hand formulation isolates the remaining bounds.
\end{proof}

\begin{proof}[Proof of Theorem \ref{thm:bin}]
Consider the sequence mapping $h_k(x) = \log \prod_{i=1}^x \binom{i}{k} = \sum_{i=1}^x \log \binom{i}{k}$. For $x \ge k$, bounding the finite differences confirms that $h_k(x)$ is strictly convex. By Schur-convexity, the product $\prod_{n_j \in N} \prod_{i=1}^{n_j} \binom{i}{k}$ subject to the constraints $\sum n_j = n$ and $n_j \ge k$ is explicitly minimized when the composition is balanced (all parts equal to $n/m$), and maximized when it is perfectly concentrated (one part as large as possible, $\tilde{n} = n - (m - 1)k$, and all others minimally $k$).

Substituting the balanced subset sizes $n_j = n/m$ into the lower bound of Lemma \ref{lem:bin_bounds} establishes the lower bound of the theorem. For the upper bound, substituting the concentrated size $\tilde{n}$ into Lemma \ref{lem:bin_bounds} bounds the dominant part. The remaining $m-1$ parts of size $k$ contribute trivially, ensuring the upper bound accurately captures the maximum possible growth under the composition constraint.
\end{proof}

\bibliographystyle{abbrv}
\bibliography{refs}

\end{document}